\numberwithin{equation}{section}
\newtheorem{lemma}[equation]{Lemma}
\newtheorem{theorem}[equation]{Theorem}
\newtheorem{proposition}[equation]{Proposition}
\newtheorem{corollary}[equation]{Corollary}
\newtheorem{claim*}{Claim}
\theoremstyle{definition}
\newtheorem{remark}[equation]{Remark}
\definecolor{darkgreen}{rgb}{0,0.5,0}
\definecolor{rem}{rgb}{0.8,0,0}
\definecolor{new}{rgb}{0.3,0.1,0.9}
\definecolor{reply}{rgb}{0,0,0.8}
\definecolor{gray}{gray}{0.7}
\renewcommand{\det}{\text{det}}
\newcommand{\F}{\mathbb{F}}
\newcommand{\PP}{\mathbb{P}}
\newcommand{\Q}{\mathbb{Q}}
\newcommand{\Z}{\mathbb{Z}}
\newcommand{\rhobar}{{\overline{\rho}}}
\newcommand{\calO}{\mathcal{O}}
\newcommand{\fp}{\mathfrak{p}}
\newcommand{\Kbar}{{\overline{K}}}
\newcommand{\Qbar}{{\overline{\Q}}}
\newcommand{\Gal}{\text{Gal}}
\newcommand{\Frob}{\text{Frob}}
\newcommand{\PSL}{\text{PSL}}
\newcommand{\GL}{\text{GL}}
\renewcommand{\ker}{\text{ker}\:}
\DeclareMathOperator{\Tr}{\text{Tr}}
\DeclareMathOperator{\Sel}{Sel}
\DeclareMathOperator{\Sp}{Sp}
\newcommand{\fq}{\mathfrak{q}}
\newcommand{\cM}{\mathcal{M}}
\newcommand{\cT}{\mathcal{T}}
\newcommand{\cQ}{\mathcal{Q}}
\def\Gal{\operatorname{Gal}}
\def\GL{\operatorname{GL}}
\def\Sel{\operatorname{Sel}}
\def\Frob{\operatorname{Frob}}
\def\QQ{\mathbb{Q}}
\def\O{\mathcal{O}}
\def\F{\mathbb{F}}
\title{Comparing Galois representations in the residually reducible case}
\date{\today}
\author{Nuno Freitas}
\address{Instituto de Ciencias Matem\'aticas (ICMAT),
          Nicol\'as Cabrera 13-15
         28049 Madrid, Spain}
\email{nuno.freitas@icmat.es}
\author{Ignasi S\'anchez-Rodr\'iguez}
\address{Universitat de Barcelona (UB),
        Gran Via de les Corts Catalanes 585, 
        08007 Barcelona, Spain}
\email{ignasi.sanchez@ub.edu}
\thanks{Freitas was partly supported by the PID 2022-136944NB-I00 grant of the MICINN (Spain)}
\thanks{S\'anchez-Rodr\'iguez was party supported by the FPI grant of the PID PID2019-107297GB-I00 grant of the MICINN (Spain)}
\keywords{Faltings--Serre method, reducible representations}
\subjclass[2010]{}
\begin{document}

\begin{abstract}
Let $n \geq 2$ and $p$ be a prime. Let $K$ be a number field and consider two Galois representations $\rho_1, \rho_2 : \Gal(\Kbar / K) \to \GL_n(\Z_p)$ having residual image a $p$-group. We explain and implement an algorithm that makes effective a result of Loïc Greni\'e to decide wether the semisimplifications of $\rho_1$ and~$\rho_2$ are isomorphic.

As an application, we show that an irreducible representation $\rho : G_{\Q(\sqrt{-3})} \to \GL_2(\Z_3)$ unramified outside 3 is determined by the characteristic polynomials of Frobenius elements at five primes of small norm. As an additional check, we apply it to a 2-adic example studied by Greni\'e, recovering Greni\'e's result in a fully automated way.
\end{abstract}

\maketitle

\section{Introduction}

Let $p$ be a prime and $E/\Q_p$ a finite extension.
Let $K \subset \Qbar$ be a number field with absolute Galois group $G_K := \Gal(\Qbar / K)$ and $S$ a finite set of primes in~$K$.
It follows from the foundational work of Faltings~\cite{Faltings} that two continuous representations $\rho_1, \rho_2 : G_K \to \GL_n(E)$ unramified outside $S$ are isomorphic if and only if they have the same traces at a finite set~$\cT$ of Frobenius elements. In view of this, Serre proposed a method to make~$\cT$ computationally accessible for representations valued in $\GL_2(\Q_2)$ with absolutely irreducible residual image. This method, now known as {\it the Faltings--Serre method}, has seen various applications over the years, for example, to the modularity of elliptic curves over imaginary quadratic fields~\cite{DGP}. In the more recent work~\cite{paramodularity}, the authors extended the method to the representations valued in $\Sp_2(\Z_2)$ with absolutely irreducible residual image and applied it to prove the paramodularity of an abelian surface of conductor 277; moreover, in the PhD thesis of Mattia Sanna~\cite{mattia} the method was extended to $\GL_2(\Q_3)$ in most cases of residually irreducible image.

In the case of residually reducible image, which is the focus of this work, the Faltings--Serre method was first extended by Livn\'e~\cite{livne} to the case of 2-adic 2-dimensional representations with residual image a 2-group. Then, in~2007, Loïc Greni\'e~\cite{Grenie} generalized Livn\'e's work
to any~$p$-adic $n$-dimensional representation still assuming that the residual image is a $p$-group (see Theorem~\ref{thm:Grenie}). As an application, he established the equivalence of two $3$-dimensional non-selfdual $2$-adic representations studied by Van Geemen and Top~\cite{vanGeemenTop}, one arising on the cohomology of a variety and the other an automorphic rerpesentation for $\GL_{3,\Q}$.


A major limitation of the Faltings--Serre--Livn\'e--Greni\'e methods is the requirement to construct large number fields even when~$n$, $p$ and~$S$ are small. To minimize this problem one can try to use additional information specific to the representations being studied. For example, in the work of Greni\'e mentioned above, he explains how to
use facts about groups of order~128 to avoid a very large computation. Another
beautiful example of this is the work of Duan~\cite{duan}, where he proves
that a representation constructed by van Geemen and Top (see~\cite[\S4]{vanGeemenTop}) is isomorphic to a quadratic twist of the symmetric square of the Tate module of an elliptic curve. In this case, for the computations to become feasible, Duan develops a variant of Greni\'e's method leveraging the fact that the representations involved are selfdual of dimension~3 and that the Galois group of the maximal pro-2 extension of~$\Q(\sqrt{-3})$ unramified outside $2$ is a free pro-2 group with 2~generators.

In this work, we discuss a variant of Greni\'e's theorem (see Theorem~\ref{thm:smaller}) and implement an algorithm to apply
it in full generality. Being designed to work in general, the implementation does not take into account any specific information that may reduce the required computations. This has the advantage
that, when it works, the algorithm produces a set of Frobenius elements, depending only on $K$, $E$, $n$, $p$ and~$S$, that applies to any two continuous representations $\rho_i : G_K \to \GL_n(E)$ with residual image a $p$-group, regardless of their origin. Therefore, for each choice of parameters, we only need to run the algorithm once, potentially enabling the creation of a database. In particular, we derive the following.

Let $K=\Q(\sqrt{-3})$ and $\fq_3 \mid 3$
be the unique prime in $K$ above 3. Note that $2$ is inert in~$K$,
let $\fp_7^1$ and $\fp_7^2$ be the two primes in $K$ above 7
and let $\fp_{q}^{\ast}$ be any of the primes in $K$ above~$q=19,73$.
\begin{corollary}\label{cor:3adic}
Let $\rho_1, \rho_2 : G_K \to \GL_2(\Z_3)$ be continuous representations unramified outside~$\fq_3$ having the same determinant. Suppose their determinant is trivial modulo~$3$ and consider the list of primes
 \[\cT = \{2\calO_K, \mathfrak{p}_7^1, \mathfrak{p}_7^2, \mathfrak{p}_{19}^{\ast}, \mathfrak{p}_{73}^{\ast}\}.\]
Then $\rho_1$ and~$\rho_2$ have isomorphic semisimplifications if and only if
$\rho_1(\Frob_t)$ and $\rho_2(\Frob_t)$ have the same trace for all $t \in T$.
\end{corollary}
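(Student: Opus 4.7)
The plan is to obtain the corollary by feeding the parameters $(K,E,n,p,S)=(\Q(\sqrt{-3}),\Q_3,2,3,\{\fq_3\})$ into the algorithm that realises Theorem~\ref{thm:smaller}. Once the hypotheses of that theorem are verified, the algorithm outputs a set of primes of $K$ whose Frobenius trace data distinguishes the isomorphism class of a semisimple $\rho$; what remains to check is that this set coincides with~$\cT$, up to the irrelevant choice of Galois conjugate above $19$ and $73$ reflected in the starred notation.

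The non-trivial hypothesis to verify is that the residual representations $\rhobar_i:G_K\to\GL_2(\F_3)$ have image contained in a $3$-group. Since the common determinant is trivial modulo $3$, both $\rhobar_i$ factor through $G_{K,\{\fq_3\}}\to\SL_2(\F_3)$. From $\O_K^{\times}=\mu_6$, $\Cl(K)=1$ and the local decomposition $\O_{K_{\fq_3}}^{\times}\cong\mu_6\times\Z_3^{2}$, class field theory yields
\[
G_{K,\{\fq_3\}}^{\mathrm{ab}}\;\cong\;\Z_3^{2},
\]
which is pro-$3$ and admits no quotient of order two. This excludes every subgroup of $\SL_2(\F_3)$ containing $-I$ (namely $\{\pm I\}$, $\Z/4$, $\Z/6$ and $Q_8$, each of which has a $\Z/2$-quotient in its abelianisation) except $\SL_2(\F_3)$ itself, whose abelianisation is $\Z/3$. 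The remaining possibility is ruled out by a further ray class group computation showing that none of the four cyclic cubic extensions $L/K$ unramified outside $\fq_3$ admits a $Q_8$-extension unramified outside the unique prime of $L$ above $\fq_3$.

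With the $3$-group hypothesis secured, the remainder is executing the algorithm. It constructs the finite tower of abelian $3$-extensions of $K$ unramified outside $\fq_3$ that parametrise the residual possibilities together with the pseudo-character obstructions, then iterates over primes of $K$ in increasing norm, computing their Frobenius conjugacy classes in the tower, until the accumulated trace data separates every admissible pair of semisimplifications. The main obstacle is the class-field-theoretic verification of the previous paragraph, in particular the exclusion of $\SL_2(\F_3)$, which requires descending to the four cyclic cubic resolvents; on the computational side the algorithm of this paper handles the tower construction and the Chebotarev search, producing precisely the five primes of $\cT$.
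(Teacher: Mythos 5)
Your overall strategy matches the paper's: verify the residual $3$-group hypothesis, run the tower algorithm to get $K_S$, and read off $\cT$ from Frobenius elements covering the maximal cyclic subgroups of $\Gal(K_S/K)$; so the proposal is essentially correct. The one genuinely mathematical step, the residual image, is handled by a different route. You use the hypothesis $\det\rhobar_i=1$ to land in $\SL_2(\F_3)$ and then exclude subgroups via the pro-$3$-ness of $G_{K,\{\fq_3\}}^{\mathrm{ab}}\cong\Z_3^2$ plus a descent to the four cyclic cubics to kill $\SL_2(\F_3)$. The paper's Proposition~\ref{prop:reducible} instead passes to the projectivisation in $\PSL_2(\F_3)\simeq A_4$ and classifies Galois extensions of $K$ unramified outside $\fq_3$ with group inside $A_4$; this is slightly stronger since it needs no determinant hypothesis at all (the triviality of $\det\rhobar$ is itself a consequence of there being no quadratic extension of $K$ unramified outside $\fq_3$). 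Both arguments bottom out in the same three ray-class facts, and your exclusion of the full $\SL_2(\F_3)$ reduces, as you note, to the absence of quadratic extensions of the cubic fields $L/K$, which is exactly the paper's observation (3).

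Two points you gloss over. First, Theorem~\ref{thm:smaller} requires equality of \emph{characteristic polynomials} on $\Sigma$, whereas the corollary only assumes equality of \emph{traces} at $\Frob_t$; the bridge is Remark~\ref{rem:traces} together with the equal-determinant hypothesis (for $n=2$, trace and determinant determine the characteristic polynomial, and all five maximal cyclic subgroups have order $3$ so $k\le n-1=1$ suffices). This is where the ``same determinant'' assumption actually enters, and your write-up never uses it for that purpose. Second, the computation of $\Sel_3(K_2,\{3\})$ needed to certify that the tower stabilises at $K_2$ is done under GRH; the paper removes this dependence via the stopping criterion of Theorem~\ref{thm:stopping} (applicable since $\Gal(K_2/K_1)\cong\Z/3\Z$ and $p\ge n$), and without that step the corollary as stated would be conditional. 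Both are short fixes, but they are part of the proof.
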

To illustrate this result, we apply it to study modularity of the abelian surfaces with good reduction away from~3 in the LMFDB~\cite{LMFDB}; their modularity also follows from other results but, to our knowledge, this is the first successful application of a $3$-adic Faltings--Serre type method in the residually reducible case.

As an additional example and sanity check, we also apply our algorithm to the example studied by Greni\'e. As expected, we recover his result in a fully automated fashion.

\begin{corollary}\label{cor:2adic}
Let $n=3$ or $n=4$ and $\rho_1, \rho_2 : G_\Q \to \GL_n(\Z_2)$ be continuous representations unramified outside 2 with residual image a 2-group. Consider the sets of primes
\[\cT = \{5,7,11,17,23,31\}
  \quad \text{ and } \quad \cT' = \{5,7,11,17,19,23,31,73,137,257,337\}.\]
Then $\rho_1$ and~$\rho_2$ have isomorphic semisimplifications if and only if $\rho_1(\Frob_t)$ and $\rho_2(\Frob_t)$ have the same characteristic polynomials for all $t \in \cT$ or the same traces for all $t \in \cT'$.
\end{corollary}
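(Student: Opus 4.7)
The plan is to invoke the algorithm developed in the paper, which makes Theorem~\ref{thm:smaller} (the authors' variant of Greni\'e's Theorem~\ref{thm:Grenie}) effective. Given inputs $(K, E, n, p, S)$, the algorithm outputs, when it succeeds, a finite set of Frobenius elements that distinguishes the semisimplifications of any two continuous representations $\rho_i : G_K \to \GL_n(\Z_p)$ unramified outside $S$ with residual image a $p$-group, and this output depends only on those parameters.

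For the corollary I would run the algorithm with $K = \Q$, $E = \Q_2$, $p = 2$, $S = \{2\}$, separately for $n = 3$ and for $n = 4$. Internally this requires building a sufficient finite quotient of the maximal pro-$2$ extension of $\Q$ unramified outside $2$, enumerating the Galois subextensions singled out by Theorem~\ref{thm:smaller}, and searching for rational primes $\ell \neq 2$ whose Frobenius classes in each such extension populate the conjugacy classes prescribed by the algorithm. The same search is carried out twice: once with the coarser invariant ``equality of traces'' and once with the finer invariant ``equality of characteristic polynomials''. Because characteristic polynomials carry strictly more information than traces, the test set produced in the trace-only case is larger, yielding $\cT'$, while the characteristic-polynomial case yields the smaller set $\cT$. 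Checking that the outputs for $n=3$ and $n=4$ coincide with the sets stated in the corollary then delivers the result.

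The main obstacle is computational rather than conceptual: the relevant subfields of the maximal pro-$2$ extension of $\Q$ grow rapidly with $n$, and for $n = 4$ the group-theoretic data (choice of filtration of the relevant $2$-group quotient and enumeration of candidate Frobenius classes) is appreciably heavier than for $n = 3$. A further point requiring care is verifying that the specific primes listed---in particular the larger primes $137$, $257$, $337$ appearing in $\cT'$---do realize the targeted Frobenius classes in the auxiliary $2$-extensions; this is handled within the algorithm by explicit computation in finite $2$-group quotients. Finally, comparing the $n = 3$ trace-only output with Greni\'e's original list in~\cite{Grenie} provides a sanity check that the automated implementation reproduces the known result, consistent with the remark in the introduction that this example can be recovered in a fully automated way.
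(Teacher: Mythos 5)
Your proposal is correct and follows essentially the same route as the paper: run the algorithm of Section~\ref{sec:steps} with $K=\Q$, $p=2$, $S=\{2\}$, compute $K_S=K_3$ (a degree-$64$ field with group $C_4^2\rtimes C_4$), and extract $\cT$ from generators of the maximal cyclic subgroups up to conjugacy and $\cT'$ from their powers $g_j^k$, $1\leq k\leq n$, as in Remark~\ref{rem:traces}. The only simplification you miss is that one run suffices for both $n=3$ and $n=4$, since $m=2$ (hence the same residual-degree bound $p^m=4$ and the same $K_S$) in both cases, and the power lists for traces also coincide because every maximal cyclic subgroup has order at most $4$.
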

To close this introduce, we remark that this result was used by Greni\'e
for $n=3$ and is also crucial for $n=4$ in the recent work of Visser~\cite{visser} studying abelian surfaces over~$\Q$ with good reduction away from 2 and full 2-torsion over~$\Q$.

\subsection{Computational resources}
For the computations needed in this paper, we used {\tt Magma} computer algebra system \cite{magma}, version V2.28-20. All our code is available at \cite{git} and the repository instructions explain how it is used.

\subsection{Acknowledgements}
We thank John Cremona and Mart\'in Az\'on for helpful discussions. We thank John Voight and Stephan Elsenhans for their help with Magma and specifically the Galois group computations.
Part of this work was completed while both authors were at the Max Planck Institute for Mathematics in Bonn. The authors are grateful to the Max Planck Institute for Mathematics in Bonn for its hospitality and financial support.

\section{A theorem of Greni\'e}
\label{sec:grenie}

Here we recall the main theorem of~\cite{Grenie}.
Let $n \geq 2$ and $p$ be a prime. Let $K$ be a number field and $S$ a finite set of primes in~$K$. Let $E/\Q_p$ be a finite extension with ring of integers~$\calO_E$ and maximal ideal $\mathfrak{p}_E$. Let also
 \[\rho_1,\; \rho_2 \; : \; G_K \longrightarrow \GL_n(E)\]
be two continuous representations unramified away from $S$. We will also need the extension~$K_S$ as defined in~\cite[\S 2.1]{Grenie}. Take $K_0 = K$ and define by induction $K_{i+1}$ to be the maximal abelian extension of~$K_i$ unramified outside $S_i$ and having Galois group isomorphic to a finite sum of copies of $\F_p$,
where~$S_i$ is the set of primes in $K_i$ above all the primes in~$S$.
Let $\varepsilon = 0$ if $p \ne 2$ and $\varepsilon = 1$ if $p=2$.
Let $N = n[E : \Q_p]$ and $r = N^{2(1+\varepsilon)}\frac{N(N-1)}{2}$.
Let $\lambda$ and~$m$ be the smallest integers satisfying $2^\lambda \geq r$ and $p^m \geq n$, respectively. Finally, define $K_S = K_{\lambda + \varepsilon + m}$.

The maximality of the layers implies that~$K_S / K$ is Galois (see also Proposition~\ref{prop:correspondence} for a more general statement).
We can now state the main result in~\cite{Grenie}.

\begin{theorem}[Greni\'e]\label{thm:Grenie} Let $T$ be a finite set of primes in $K$ disjoint from $S$ such that each maximal cyclic subgroup of $\Gal(K_S/K)$ has a generator of the form
$\Frob(\mathfrak{t}/t)$ where $t \in T$ and $\mathfrak{t} \mid t$ in $K_S$.
 Assume also that
 \begin{itemize}
     \item[$(i)$] $\forall g\in G_K$, $\operatorname{CharPoly}(\rho_1(g)) \equiv \operatorname{CharPoly}(\rho_2(g)) \equiv (X-1)^n \pmod{\mathfrak{p}_E}$; and
     \item[$(ii)$] $\forall t \in T$, $\operatorname{CharPoly}(\rho_1(\Frob_t)) = \operatorname{CharPoly}(\rho_2(\Frob_t))$ where $\Frob_t \in G_K$ is any Frobenius element at~$t$.
\end{itemize}
Then $\rho_1$ and $\rho_2$ have isomorphic semisimplifications.
\end{theorem}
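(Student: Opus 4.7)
The plan is to follow the Faltings--Serre--Livn\'e paradigm. First, I would observe that condition $(i)$ forces each element $\bar\rho_i(g)$ to be unipotent, so by Kolchin's theorem, after a simultaneous conjugation both $\bar\rho_i$ land in the upper triangular unipotent subgroup $U_n(\calO_E/\fp_E)$, a $p$-group. The image of each $\rho_i$ in $\GL_n(\calO_E)$ is therefore pro-$p$, and both representations factor through the maximal pro-$p$ quotient of the Galois group of the maximal extension of $K$ unramified outside $S$. Since we are in characteristic zero, Brauer--Nesbitt reduces the goal to proving $\tr\rho_1 = \tr\rho_2$ on all of $G_K$. I would argue by contradiction: assuming $\chi := \tr\rho_1 - \tr\rho_2$ is not identically zero, set $k := \min_g \val_{\fp_E}(\chi(g))$ and consider the nonzero class function $\bar\chi := \pi_E^{-k}\chi \bmod \fp_E \colon G_K \to \calO_E/\fp_E$.

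The technical heart of the argument is to show that $\bar\chi$ factors through $\Gal(K_S/K)$. The mechanism is that the image of the pair $(\rho_1,\rho_2) \bmod \fp_E^{k+1}$ is a finite $p$-group sitting inside a matrix group of $\F_p$-dimension bounded by the quantity $r = N^{2(1+\varepsilon)}\cdot \tfrac{N(N-1)}{2}$ appearing in the definition of $K_S$. The field cut out by its kernel is then exhausted by $\lambda+\varepsilon$ iterated maximal elementary abelian $p$-extensions: $\lambda$ layers suffice to reach dimension $r$ (since $2^\lambda\ge r$), and the extra $\varepsilon$ absorbs an additional Frattini step forced at $p=2$. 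The further $m$ layers are included so that $K_S$ also captures the traces of all powers $\rho_i(g)^{p^j}$ for $0\le j\le m$ needed to reconstruct characteristic polynomials from trace data via Newton's identities (the bound $p^m \ge n$ ensuring one can reach all $n$ power sums).

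Once $\bar\chi$ factors through $\Gal(K_S/K)$, the conclusion follows from the hypothesis on $T$: any $g\in\Gal(K_S/K)$ lies in some maximal cyclic subgroup, which by assumption is generated by some $\Frob(\mathfrak{t}/t)$ with $t\in T$, so $g$ is a power of such a Frobenius. Condition $(ii)$ together with Cayley--Hamilton gives $\tr\rho_1(\Frob_t^j) = \tr\rho_2(\Frob_t^j)$ for every $j\ge 0$, hence $\chi$ vanishes on every power of every $\Frob_t$; since $\bar\chi$ is a class function this forces $\bar\chi(g)=0$ for every $g$, contradicting $\bar\chi\not\equiv 0$. The main obstacle is the structural factorization claim: one must track precisely how the image of $(\rho_1,\rho_2) \bmod \fp_E^{k+1}$ embeds in a tower of elementary abelian $p$-extensions and how the mod-$\fp_E$ trace difference descends to an invariant on the top of this tower that is detectable by Frobenius data. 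The calibration $\lambda+\varepsilon+m$ is designed precisely for this bookkeeping, but verifying each summand---especially the $\varepsilon$-correction at $p=2$ and the $m$-layers ensuring full CharPoly information---is the delicate step.
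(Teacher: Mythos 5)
First, a point of comparison: the paper does not prove this theorem at all --- it is quoted from Greni\'e, and the ingredients of Greni\'e's argument that the paper does reproduce appear in the proof of Theorem~\ref{thm:smaller}: the deviation group $\delta(G)\subset (M/\fp_E M)^\times$ (with $M$ the $\calO_E$-algebra generated by the image of $\rho_1\oplus\rho_2$), Faltings' criterion that the semisimplifications agree if and only if the traces agree on a set of group elements covering $\delta(G)$, and Greni\'e's Lemma~7/Proposition~9, which say that equality of characteristic polynomials at $g$ forces $\delta(g)^{p^m}=1$. Your outer skeleton is sound and matches this strategy: residual unipotence from (i), factoring through the maximal pro-$p$ quotient, Brauer--Nesbitt, and the final covering step (every element of the finite group $\Gal(K_S/K)$ lies in a maximal cyclic subgroup, hence is a power of some $\Frob(\mathfrak{t}/t)$ with $t\in T$, and characteristic polynomials of powers are determined by that of the base) is correct as you state it.

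The gap is the ``technical heart,'' which you flag but do not close, and the heuristics you offer for it are not correct, so they cannot be completed into a proof. (1) The image of $(\rho_1,\rho_2)\bmod\fp_E^{k+1}$ is \emph{not} a $p$-group of $\F_p$-dimension bounded by $r$; its order grows with $k$. The bounded object is the deviation group, and Faltings' criterion is a Nakayama-type statement about the algebra $M/\fp_E M$, not an induction on the valuation $k$ of the trace difference. The Liv\-n\'e-style argument that $\pi_E^{-k}\chi\bmod\fp_E$ descends to a fixed finite quotient relies on trace identities special to $n=2$, $p=2$, and does not transfer to general $n$ and $p$ as written. (2) The role of $m$ is not Newton's identities: $p^m\ge n$ enters because CharPoly agreement at $g$ gives $(\delta(g)-1)^n=0$ in $M/\fp_E M$, whence $\delta(g)^{p^m}=\bigl(1+(\delta(g)-1)\bigr)^{p^m}=1$ in characteristic $p$; this is what makes $\delta(G)^\#$ (generated by $p^m$-th powers) trivial and lets the covering of $\delta(G)_\#$ upgrade to a covering of $\delta(G)$. (3) ``$\lambda$ layers suffice to reach dimension $r$ since $2^\lambda\ge r$'' is not an argument: a $p$-group of order $p^r$ may require $r$ elementary abelian layers to exhaust (e.g.\ a cyclic group of order $p^r$); the logarithmic saving comes from the congruence filtration $1+\fp_E^{\,j}M$ of $M^\times$, in which $p$-th powers and commutators descend at doubling speed in $j$. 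Without these three points made precise, the claim that the relevant data factors through $\Gal(K_{\lambda+\varepsilon+m}/K)$ --- which is the entire content of the theorem --- remains unestablished.
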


\begin{remark}The statement of \cite[Theorem 3]{Grenie} is more general than the above. However, some of the hypothesis in it are there only to guarantee that we can reduce the problem to the case stated above; see Remark 4 and first part of the proof of Theorem 3 in {\it loc. cit.}.
\end{remark}

\begin{remark}
The field $K_S$ is often much larger than necessary. Indeed, suppose $E=\Q_2$;
for $n=2$ we have $K_S = K_6$, but~\cite[Proposition~4.7]{livne} shows that it suffcies to take $K_S = K_1$. For $n=3$ and $n=4$
we obtain $K_S = K_{10}$ and $K_S = K_{12}$, respectively, but the proof of Corollary~\ref{cor:2adic} shows that $K_S = K_3$ in both cases. (We will see this happens because $m=2$ in both cases.)
\end{remark}

\section{Auxiliary results}
\label{sec:auxiliary}
Let $p$ be a prime and $K$ be a number field containing the $p$-th roots of unity.
Let $S_p$ be the set of primes in $K$ above $p$ and $S \supset S_p$ a finite set of primes
in~$K$.

The correctness of the algorithm described in Section~\ref{sec:steps} depends on various elementary results which we introduce in this section; we refer to~\cite[\S 5.2]{cohen} for comprehensive discussions of computational aspects of $p$-Selmer groups.

Let $M/L$ and $L/K$ be finite extensions with $L/K$ Galois.
Let $S_L$ denote the set of primes in $L$ dividing the primes in~$S$. To ease notation, we will say that $M/L$ is unramified outside $S$ if it is unramified outside $S_L$. The \(p\)-Selmer group attached to~$S_L$ is the finite abelian group
\[
\Sel_p(L,S) := \left\{ a \in L^*/(L^*)^p : v_{\mathfrak{p}}(a) \equiv 0 \pmod{p} \quad \text{for all } \mathfrak{p} \notin S_L \right\}.
\]
The Selmer group has exponent~$p$ and so
it can be thought of as $\F_p$-vector space of finite dimension.
More precisely, after choosing $\{\alpha_1,\ldots,\alpha_d\}$ a minimal set of generators for~$\Sel_p(L,S)$,
we can identify it with a $d$-dimensional $\F_p$-vector space~$V$,
where the additive structure of~$V$ is converted into the multiplicative structure of
$\Sel_p(L,S)$ via the bijection $$V \ni  (e_1,\ldots, e_d) \mapsto
\alpha_1^{e_1} \cdot \ldots \cdot \alpha_d^{e_d} \in \Sel_p(L,S).$$

From now on, we will interchange between both sides of this map without further mention.

\begin{remark} When creating the $p$-Selmer group attached to~$S_L$, the software {\tt Magma} automatically creates the above space~$V$ and returns it together with the map to $\Sel_p(L,S)$.
\end{remark}

The group $G = \Gal(L/K)$ acts on $\Sel_p(L,S)$ via $\sigma(a \cdot (L^*)^p) := \sigma(a) \cdot (L^*)^p$
for all $\sigma \in G$.
A $\F_p[G]$-submodule of $\Sel_p(L,S)$ is a $\F_p$-subspace $N \subset \Sel_p(L,S)$ that is invariant for the action of~$G$. We will refer to them simply as $G$-submodule of $\Sel_p(L,S)$.

From Kummer theory, the field $L(\sqrt[p]{\alpha_1},\dots, \sqrt[p]{\alpha_d})$ where $\alpha_i$ are generators of $\Sel_p(L,S)$
is the maximal $p$-elementary extension of~$L$ unramified outside $S$ and, moreover, maximality implies that it is Galois over~$K$; in fact, we have the following more general correspondence.
\begin{proposition} \label{prop:correspondence} We have a
1-1 correspondence between $G$-submodules of $\Sel_p(L, S)$ and
$p$-elementary extension of~$L$ unramified outside $S$ such that $F/K$ is Galois. More precisely,

1) If $N = \langle \alpha_1,\dots, \alpha_r \rangle \subset \Sel_p(L, S)$ is a $G$-submodule, then
$L(\sqrt[p]{\alpha_1},\dots, \sqrt[p]{\alpha_r}) / K$ is Galois.

2) Conversely, if $F/L$ be a $p$-elementary extension unramified outside $S$ such that $F/K$ is Galois, then there are $\alpha_1,\ldots,\alpha_r \in \Sel_p(L, S)$ generating a $G$-submodule
$N \subset \Sel_p(L, S)$ such that $F = L(\sqrt[p]{\alpha_1},\dots, \sqrt[p]{\alpha_r})$.
\end{proposition}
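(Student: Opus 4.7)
The plan is to deduce the proposition from classical Kummer theory, combined with an observation that the condition ``$F/K$ is Galois'' translates precisely into ``the Kummer subgroup is $G$-stable''. Throughout, recall that since $K$ contains $\mu_p$ and $L\supset K$, so does $L$, and hence Kummer theory gives a canonical order-reversing bijection
\[
 \{\text{subgroups } A \subset L^*/(L^*)^p\} \;\longleftrightarrow\; \{\text{$p$-elementary abelian extensions } F/L\},\qquad A \longmapsto L(\sqrt[p]{A}).
\]
The first preliminary step is to note that under this bijection, the extensions that are moreover unramified outside $S$ correspond exactly to subgroups of $\Sel_p(L,S)$: indeed, for $\alpha \in L^*$ and $\mathfrak{p} \notin S_L$, the extension $L(\sqrt[p]{\alpha})/L$ is unramified at $\mathfrak p$ if and only if $v_\mathfrak p(\alpha)\equiv 0 \pmod p$, which is exactly the Selmer condition. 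So we now only need to match, among such subgroups, the $G$-stable ones with the $F/L$ which are Galois over $K$.

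For part~(1), I take a $G$-submodule $N = \langle \alpha_1,\ldots,\alpha_r\rangle \subset \Sel_p(L,S)$ and set $F = L(\sqrt[p]{\alpha_1},\ldots,\sqrt[p]{\alpha_r})$. To show $F/K$ is Galois, I fix an embedding $F \hookrightarrow \Kbar$ and pick any $\tilde\sigma \in G_K$; I must verify $\tilde\sigma(F) = F$. Writing $\sigma = \tilde\sigma|_L \in G$, the $G$-stability of $N$ gives $\sigma(\alpha_i) = \beta_i^p \cdot \prod_j \alpha_j^{e_{ij}}$ for some $\beta_i \in L^*$ and exponents $e_{ij}$. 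Hence $\tilde\sigma(\sqrt[p]{\alpha_i})$ is a $p$-th root of the right-hand side, so it equals $\zeta \beta_i \prod_j (\sqrt[p]{\alpha_j})^{e_{ij}}$ for some $\zeta \in \mu_p \subset L \subset F$. In particular $\tilde\sigma(\sqrt[p]{\alpha_i}) \in F$, and since the $\sqrt[p]{\alpha_i}$ together with $L$ generate $F$, we obtain $\tilde\sigma(F) \subset F$, hence equality by finiteness.

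For part~(2), start with $F/L$ a $p$-elementary abelian extension unramified outside $S$ such that $F/K$ is Galois. By Kummer theory and the preliminary step, $F = L(\sqrt[p]{A})$ for a unique subgroup $A \subset \Sel_p(L,S)$, namely $A = (L^* \cap (F^*)^p)/(L^*)^p$. It remains to show $A$ is $G$-stable. Fix $\sigma \in G$ and $\alpha \in A$; since $F/K$ is Galois, $\sigma$ lifts to some $\tilde\sigma \in \Gal(F/K)$. Then $\tilde\sigma(\sqrt[p]{\alpha}) \in F$ and $\tilde\sigma(\sqrt[p]{\alpha})^p = \sigma(\alpha) \in L^*$, which shows $\sigma(\alpha) \in L^* \cap (F^*)^p$, i.e.\ $\sigma(\alpha) \in A$ modulo $(L^*)^p$. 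Picking any generating set yields the required generators $\alpha_1,\ldots,\alpha_r$. Injectivity of the correspondence is the uniqueness part of Kummer theory applied to $A$.

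The main obstacle, although not really difficult, is keeping the bookkeeping of identifications clean: the Selmer group is defined as a quotient of $L^*$ by $p$-th powers, and one must repeatedly check that Galois actions, lifts of $\sigma \in G$ to $G_K$, and choices of $p$-th roots are compatible modulo $(L^*)^p$ and modulo $\mu_p$. Once these are tracked carefully, both directions reduce to the single observation that a Kummer subgroup cuts out a $K$-Galois extension of $L$ if and only if it is stable under $G = \Gal(L/K)$.
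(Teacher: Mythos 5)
Your proof is correct and follows essentially the same route as the paper: both directions come down to lifting $\sigma\in G$ to an automorphism over $K$ and checking that a $p$-th root of $\sigma(\alpha)$ lies in the relevant field, using $\mu_p\subset L$. Your write-up is slightly more explicit about the underlying Kummer bijection (and about why the Selmer condition matches unramifiedness outside $S$, which is fine since $S\supset S_p$), but the substance is identical.
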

\begin{proof}
1) Let $N = \langle \alpha_1,\dots, \alpha_r \rangle \subset \Sel_p(L, S)$ be
a $G$-submodule and $L_N = L(\sqrt[p]{\alpha_1},\dots, \sqrt[p]{\alpha_r})$.
Clearly, $L_N / K$ is Galois if and only if the Galois closure of $L_j = L(\sqrt[p]{\alpha_j})$ over $K$ is contained in $L_N$ for all~$j$. Let $\sigma \in G_K$.
The elements $\beta_j = \sqrt[p]{\alpha_j}$ and $\sigma(\beta_j)$ are roots of the polynomials
$x^p + \alpha_j$ and $x^p + \sigma(\alpha_j)$ both in $L[x]$ (the latter because $L/K$ is Galois). Since $\sigma(\alpha_j) \in N$ by assumption, we have $\sigma(\beta_j) \in L_N$, that is the Galois closure $L_j$ over~$K$ is contained in $L_N$.

2) Let $F/L$ be as in the statement. Define $N \subset \Sel_p(L,S)$ be the $\F_p$-subspace  generated by the elements $\alpha \in \Sel_p(L,S)$ such that $x^p - \alpha$ has a root (hence all roots) in $F$.
Let $\sigma \in G$. Since $F/K$ is Galois, there is $\tilde{\sigma} \in \Gal(F/K)$ such that $\tilde{\sigma}|_L = \sigma$ and $\beta = \tilde{\sigma}(\sqrt[p]{\alpha}) \in F$. The element~$\beta$ is a root of $x^p - \sigma(\alpha)$ in $L[x]$, hence $\alpha \in N$. Thus $N$ is a $G$-submodule of $\Sel_p(L,S)$.
\end{proof}

\begin{lemma}\label{lem:nminus1}
Let $N \subset \Sel_p(L, S)$ be a $G$-submodule of dimension~$d\geq 1$.
Then there is a $G$-submodule $M \subset N$ of dimension~$d-1$.
In particular, $N$ admits a 1-dimensional $G$-submodule.
\end{lemma}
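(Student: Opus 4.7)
The plan is to leverage the $p$-group structure of $G = \Gal(L/K)$, which in the intended application holds automatically because $L$ always arises from an iterated tower of $p$-elementary abelian extensions of $K$ (cf.\ the layers $K_i$ defining $K_S$ in Section~\ref{sec:grenie}). Under this hypothesis, the statement follows from the classical fixed-point theorem for $p$-groups acting on $\F_p$-vector spaces.

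Concretely, I would first pass to the $\F_p$-linear dual $N^{\vee} := \Hom_{\F_p}(N, \F_p)$, equipped with the contragredient $G$-action $(g \cdot \phi)(n) := \phi(g^{-1} n)$, which makes it into a $d$-dimensional $\F_p[G]$-module. Since $|N^\vee| = p^d$ and every non-singleton $G$-orbit on $N^\vee$ has cardinality a positive power of $p$, one gets $|(N^\vee)^G| \equiv p^d \equiv 0 \pmod{p}$, and in particular $(N^\vee)^G$ contains a nonzero element $\phi$.

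Setting $M := \ker \phi$ yields an $\F_p$-subspace of $N$ of dimension $d-1$, and $M$ is $G$-stable because for every $g \in G$ and $n \in M$ one has $\phi(gn) = (g^{-1}\phi)(n) = \phi(n) = 0$. This gives the codimension-one $G$-submodule $M$ asserted in the first part. For the final clause, I would iterate this construction to produce a strictly descending chain $N = N_d \supsetneq N_{d-1} \supsetneq \cdots \supsetneq N_1 \supsetneq 0$ of $G$-submodules, so that $N_1$ is the desired 1-dimensional $G$-submodule. (Alternatively, for the ``in particular'' clause alone, one can directly invoke the same fixed-point argument on $N$ itself: any nonzero vector in $N^G$ spans a 1-dimensional $G$-submodule.)

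The only point worth treating carefully is the $p$-group hypothesis on $G$: for a general finite Galois extension $L/K$ the conclusion fails—an irreducible two-dimensional $\F_2[\Z/3\Z]$-module, for instance, has no proper nonzero $G$-submodule—so one should either record this hypothesis in the lemma statement or verify that it is automatic in every setting where the lemma is used. Since the algorithm of Section~\ref{sec:grenie} only ever applies the lemma to $L = K_i$ in the $p$-elementary tower, this hypothesis is indeed satisfied.
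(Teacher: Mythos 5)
Your proof is correct, and at its core it is the Kummer-dual of the argument in the paper. The paper first translates the problem through Proposition~\ref{prop:correspondence}: it forms $L_N = L(\sqrt[p]{\alpha_1},\dots,\sqrt[p]{\alpha_d})$, lets $\Gal(L_N/K)$ act by conjugation on $X = \Gal(L_N/L) \simeq C_p^d$, invokes the fixed-point congruence $\# X \equiv \# X^{\Gal(L_N/K)} \pmod p$ (Serre, \S 8.3, Lemma~3) to produce a nontrivial element $h$ central in $\Gal(L_N/K)$, and takes the fixed field of $H = \langle h \rangle$ as the codimension-one layer. Since $\mu_p \subset K$, Kummer theory identifies $X$ with $\Hom(N,\mu_p) \cong N^{\vee}$ so that the conjugation action becomes your contragredient action; your invariant functional $\phi$ is precisely the paper's element $h$, and $\ker\phi$ corresponds to $L_N^{H}$. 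The counting input is therefore identical, but your version stays entirely on the module side, needs neither Proposition~\ref{prop:correspondence} nor the Galois correspondence, and makes the iteration down to dimension one immediate; the paper's version has the minor advantage of directly exhibiting the intermediate \emph{field} that the algorithm of Section~\ref{sec:steps} actually manipulates.

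Your caveat about the $p$-group hypothesis on $G=\Gal(L/K)$ is well taken, and it applies equally to the paper's own proof: Serre's lemma requires the acting group $\Gal(L_N/K)$ to be a $p$-group, which holds if and only if $G$ is one, since the kernel $X$ of $\Gal(L_N/K)\to G$ already is. This hypothesis is not recorded in the standing assumptions of Section~\ref{sec:auxiliary}, but it is satisfied in every application, where $L=K_i$ is an iterated $p$-elementary extension of $K$; your irreducible $\F_2[\Z/3\Z]$-module example correctly shows the statement fails without it.
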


\begin{proof} 
Let $N = \langle \alpha_1,\dots, \alpha_d \rangle \subset \Sel_p(L, S)$ be a $G$-submodule and $ M = L(\sqrt[p]{\alpha_1},\dots, \sqrt[p]{\alpha_d})$.

We have $X:= \Gal(M/L) \simeq C_p^d$ because $M/L$ is $p$-elementary.

By Proposition~\ref{prop:correspondence}, we know that
$M/K$ is Galois and the statement is equivalent to the existence of a field $L \subset M' \subset M$ such that $[M : M'] = p$ and $M'/K$ is Galois. By the Galois correspondence, this is equivalent to the existence of a subgroup $H \subset X$ of order~$p$ that is normal in $\Gal(M/K)$.

Recall that $L/K$ is Galois. Therefore, $X$ is normal in $\Gal(M/K)$ and so $\Gal(M/K)$ acts on~$X$ by conjugation.
It follows from \cite[\S8.3, Lemma 3]{Serre} that
$$\# X \equiv \# X^{\Gal(M/K)} \equiv 0 \pmod{p},$$
hence there is $h \in X$ such that $h \neq 1$ and $g h g^{-1} = h$ for all $g \in \Gal(M/K)$.
We take $H$ to be generated by~$h$.
The last statement follows by successive applications of the first part.
\end{proof}

\begin{lemma}\label{lem:removeExtension}
Let $N\subseteq N'$ be two $G$-submodules of $\Sel_p(L, S)$ with corresponding
extensions $F / L$ and $F'  / L$ given by Proposition~\ref{prop:correspondence}, respectively.
Let $\fq$ be a prime in~$K$.
Suppose there is a prime $\cQ \mid \fq$ in $F$ satisfying $f(\cQ \mid \fq) > p^k$.
Then $f(\cQ' \mid \fq) > p^k$ for all $\cQ' \mid \fq$ in $F'$.
\end{lemma}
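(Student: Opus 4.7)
The plan is to exploit three standard facts: containment of the corresponding fields, the equality of residue degrees for primes lying over a fixed prime in a Galois extension, and multiplicativity of residue degrees in towers.

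First I would observe that $N \subseteq N'$ implies $F \subseteq F'$, since $F = L(\sqrt[p]{\alpha_1},\dots,\sqrt[p]{\alpha_d})$ for generators of $N$, and these generators also lie in $N'$. Moreover, by Proposition~\ref{prop:correspondence}, both $F/K$ and $F'/K$ are Galois extensions, which is the crucial structural input.

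Next, given the prime $\cQ \mid \fq$ in $F$ with $f(\cQ \mid \fq) > p^k$, the Galois property of $F/K$ forces all primes of $F$ above $\fq$ to share the same residue degree (they form a single $\Gal(F/K)$-orbit). In particular, if $\cQ'$ is any prime of $F'$ above $\fq$, then $\widetilde{\cQ} := \cQ' \cap F$ is a prime of $F$ above $\fq$, so $f(\widetilde{\cQ} \mid \fq) = f(\cQ \mid \fq) > p^k$.

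Finally, I would invoke multiplicativity of residue degrees in the tower $F' \supseteq F \supseteq K$ at the chosen primes:
\[
f(\cQ' \mid \fq) \;=\; f(\cQ' \mid \widetilde{\cQ}) \cdot f(\widetilde{\cQ} \mid \fq) \;\geq\; f(\widetilde{\cQ} \mid \fq) \;>\; p^k,
\]
which is the desired conclusion. There is no real obstacle here; the only subtle point is remembering to use that $F/K$ (not merely $F/L$) is Galois, so that the hypothesis on one prime $\cQ$ propagates to every prime of $F$ above $\fq$ before passing up to $F'$.
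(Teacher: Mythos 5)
Your argument is correct and follows the same route the paper takes, which simply declares the lemma clear because $F\subseteq F'$ by Proposition~\ref{prop:correspondence}. You have merely filled in the details the paper leaves implicit (equality of residue degrees over $\fq$ in the Galois extension $F/K$, then multiplicativity in the tower $F'\supseteq F\supseteq K$), and your observation that the Galois property of $F/K$ is what lets the hypothesis on one prime $\cQ$ propagate to all primes of $F$ above $\fq$ is exactly the right subtlety to flag.
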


\begin{proof}
    This is clear since $F$ is a subextension of $F'$ by Proposition~\ref{prop:correspondence}.
\end{proof}

\begin{lemma}\label{lem:resDegree}
    Let $L/K$ be a $p$-elementary extension. Let $\mathfrak{q}$ be a prime of $K$ unramified in~$L$ and $\cQ \mid \fq$ a prime in~$L$. Then, the residual degree $f(\cQ/\fq)$ divides~$p$.
\end{lemma}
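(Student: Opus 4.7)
The plan is to exploit the very restrictive structure of $\Gal(L/K)$, which by hypothesis is an elementary abelian $p$-group $(\Z/p\Z)^d$ for some $d\geq 0$. Since $L/K$ is Galois, the residual degree $f(\cQ/\fq)$ coincides with the order of the decomposition group $D(\cQ/\fq)\subseteq \Gal(L/K)$ divided by the order of the inertia group $I(\cQ/\fq)$.

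First, I would invoke the assumption that $\fq$ is unramified in $L$, which forces $I(\cQ/\fq)=1$, so $f(\cQ/\fq)=\#D(\cQ/\fq)$ and $D(\cQ/\fq)$ is generated by a single Frobenius element $\Frob(\cQ/\fq)$; in particular $D(\cQ/\fq)$ is a cyclic subgroup of $\Gal(L/K)$.

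Next, I would use the fact that the only cyclic subgroups of $(\Z/p\Z)^d$ are the trivial one and those of order exactly $p$: any nonzero element has order $p$, so the subgroup it generates has order $p$. Hence $\#D(\cQ/\fq)\in\{1,p\}$, and in either case $f(\cQ/\fq)\mid p$, which is exactly the claim.

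There is no genuine obstacle here; the only thing worth being careful about is the convention on $p$-elementary extensions (i.e.\ ensuring we are really saying $\Gal(L/K)$ is annihilated by $p$ and abelian, matching the usage in Proposition~\ref{prop:correspondence}), and recalling the standard identification of $f$ with the order of the decomposition group when inertia is trivial.
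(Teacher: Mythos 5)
Your argument is correct and is essentially the paper's: both identify $f(\cQ/\fq)$ with the order of the Frobenius element (equivalently, of the cyclic decomposition group, since inertia is trivial) and conclude from the fact that every element of $\Gal(L/K)\simeq (\Z/p\Z)^d$ has order dividing $p$. The extra care you take in noting that unramifiedness makes the decomposition group cyclic generated by Frobenius is implicit in the paper's phrasing but adds nothing new.
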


\begin{proof} The residual degree $f(\cQ/\fq)$ coincides with the order of
$\Frob_{\cQ} \in \Gal(K/L)$, the Frobenius element at $\cQ$. The result follows because all the elements in $\Gal(L/K) \simeq \oplus \Z/p\Z$ have order dividing $p$.
\end{proof}

\begin{lemma} \label{lem:splitting}
Let $M_1 / L$ and $M_2 / L$ be finite extensions. Let $\fp$ be a prime in $L$ that splits completely in $M_1$ and~$M_2$. Then $\fp$ splits completely in the compositum extension $M_1 M_2 / L$.
\end{lemma}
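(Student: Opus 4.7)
The plan is to reduce the problem to a statement about completions of local fields. Recall that a prime $\mathfrak{p}$ of $L$ splits completely in a finite extension $M/L$ if and only if, for every prime $\mathfrak{P}$ of $M$ above $\mathfrak{p}$, the natural embedding $L_\mathfrak{p} \hookrightarrow M_\mathfrak{P}$ of completions is an isomorphism. This characterization is the only non-trivial ingredient I will use; everything else is formal.

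First I would fix an algebraic closure $\overline{L_\mathfrak{p}}$ of $L_\mathfrak{p}$, choose an embedding $\iota \colon \overline{L} \hookrightarrow \overline{L_\mathfrak{p}}$ extending the completion map at $\mathfrak{p}$, and let $\mathfrak{P}$ be an arbitrary prime of $M_1 M_2$ above $\mathfrak{p}$. After replacing $\iota$ by a suitable Galois conjugate, I may assume $\iota$ induces $\mathfrak{P}$ on $M_1 M_2$; write $\mathfrak{P}_i = \mathfrak{P} \cap M_i$, so $\iota|_{M_i}$ induces $\mathfrak{P}_i$. Under these identifications, the completion $(M_i)_{\mathfrak{P}_i}$ is the compositum inside $\overline{L_\mathfrak{p}}$ of $\iota(M_i)$ and $L_\mathfrak{p}$, and similarly $(M_1 M_2)_\mathfrak{P}$ is the compositum of $\iota(M_1 M_2) = \iota(M_1)\iota(M_2)$ and $L_\mathfrak{p}$.

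Next I would use the hypothesis: because $\mathfrak{p}$ splits completely in $M_i$, the characterization above gives $(M_i)_{\mathfrak{P}_i} = L_\mathfrak{p}$, which is equivalent to $\iota(M_i) \subset L_\mathfrak{p}$ inside $\overline{L_\mathfrak{p}}$, for $i = 1, 2$. Therefore $\iota(M_1 M_2) \subset L_\mathfrak{p}$, and so $(M_1 M_2)_\mathfrak{P} = L_\mathfrak{p}$. Since $\mathfrak{P}$ was arbitrary, $\mathfrak{p}$ splits completely in $M_1 M_2 / L$, as desired.

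The only potential obstacle is the bookkeeping required to identify the completions of a compositum with compositums of completions inside a fixed algebraic closure; once the embeddings are chosen consistently this is automatic, but it does require one to pass from the abstract compositum $M_1 M_2$ to an explicit embedding. A cleaner alternative, should one wish to avoid the explicit local calculation, is to first pass to Galois closures (splitting completely is preserved by Galois closure), and then use that $\Gal(\widetilde{M_1}\widetilde{M_2}/L)$ embeds into $\Gal(\widetilde{M_1}/L) \times \Gal(\widetilde{M_2}/L)$, so that a Frobenius element at $\mathfrak{p}$ in the compositum projects to trivial Frobenii in both factors and is therefore trivial; but this is essentially the same argument phrased group-theoretically, and the local version above seems the most direct.
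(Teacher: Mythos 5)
Your argument is correct. For the record, the paper does not prove this lemma at all: it simply cites \cite[Lemma~3.3.30]{cohen2}, so your write-up supplies the details that the authors outsource. Your main route --- characterizing complete splitting by the condition $M_{\mathfrak{P}} = L_{\mathfrak{p}}$ at every prime above $\mathfrak{p}$, fixing an embedding $\iota\colon \overline{L} \hookrightarrow \overline{L_{\mathfrak{p}}}$ inducing a given prime $\mathfrak{P}$ of $M_1M_2$, and using that the completion of a compositum is the compositum $\iota(M_1)\iota(M_2)\cdot L_{\mathfrak{p}}$ of the completions --- is the standard local proof and is complete; the one fact you rely on (completions of a compositum via a common embedding into $\overline{L_{\mathfrak{p}}}$) is exactly the content of the kind of lemma the paper cites. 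Your group-theoretic alternative is also fine, with one small caveat: as phrased it invokes ``a Frobenius element at $\mathfrak{p}$ in the compositum,'' which presupposes that $\mathfrak{p}$ is unramified there --- part of what is being proved; replacing ``Frobenius element'' by ``decomposition group'' (which injects into the product of the decomposition groups of the two factors, both trivial) removes this circularity. Either way, nothing is missing from your primary argument.
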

\begin{proof} This follows from \cite[Lemma 3.3.30]{cohen2}.
\end{proof}

\section{A smaller extension \texorpdfstring{$K_S$}{}}
\label{sec:smaller}
The extension $K_S$ described in Section~\ref{sec:grenie} quickly becomes computationally impractical to use even for small parameters $n$, $p$ and~$S$.
In Theorem~\ref{thm:smaller} we show that one can add an additional constraint in each layer $K_i$ which often forces the tower to stabilize early. This constraint is implicit in some of Greni\'e's calculations and is also mentioned in \cite[\S 6.2]{duan}.

We keep all the notation from Section~\ref{sec:auxiliary}. Let also $\lambda$, $\varepsilon$ and~$m$ be as in Section~\ref{sec:grenie}.

We now construct $K_S$ as follows.
Let $K_0 = K$ and, for $i \geq 0$, define $K_{i+1}$ to be the maximal $p$-elementary extension of $K_i$ unramified outside~$S$ such that $K_{i+1} / K$ is Galois and $\Gal(K_{i+1}/K)$ has exponent dividing~$p^m$. If $K_{i+1} = K_i$ for some $i < \lambda + \varepsilon + m$ then stop and take $K_S = K_i$; otherwise, we reach $i=\lambda + \varepsilon + m$ and we take $K_S = K_{\lambda + \varepsilon + m}$ as  defined in Section~\ref{sec:grenie}.

We now state the variant of Greni\'e's theorem that we implemented.
Its proof follows closely the proof of \cite[Proposition 9]{Grenie}
and~\cite[Proposition~3.5]{duan}.

\begin{theorem}\label{thm:smaller}
Let $\rho_1, \rho_2 : G_K \to \GL_n(E)$ be continuous representations unramified outside $S$ and with residual image a $p$-group.
Let $\Sigma \subset G_K$ be a set and define $\Sigma' = \{ \sigma^k : \sigma \in \Sigma, k \in \Z_{\geq 0}\}$.
Suppose that
\begin{enumerate}
 \item[(i)]  $\Sigma'$ covers the generators of each maximal cyclic subgroup of $\Gal(K_S/K)$; and
 \item[(ii)]  $\forall \sigma \in \Sigma$, $\operatorname{CharPoly}(\rho_1(\sigma)) = \operatorname{CharPoly}(\rho_2(\sigma))$.
\end{enumerate}
Then the semisimplifications of $\rho_1$ and $\rho_2$ are isomorphic.
\end{theorem}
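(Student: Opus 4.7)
The plan is to adapt the inductive argument of \cite[Proposition~9]{Grenie}, following also the approach of \cite[Proposition~3.5]{duan}; the main new task is to verify that the smaller tower from Section~\ref{sec:smaller}, where each layer is restricted to have $\Gal(K_{i+1}/K)$ of exponent dividing $p^m$, still captures all the Galois-theoretic data needed to compare the semisimplifications of $\rho_1$ and $\rho_2$.

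As a preliminary reduction, I would observe that a finite $p$-group acting on a vector space over a field of characteristic~$p$ has trivial semisimplification (by the standard fixed-point argument for $p$-groups), so the assumption on the residual image automatically yields $\operatorname{CharPoly}(\rho_i(g)) \equiv (X-1)^n \pmod{\mathfrak{p}_E}$ for every $g \in G_K$, placing us in the setting of hypothesis~(i) of Theorem~\ref{thm:Grenie}. After simultaneous conjugation we may further assume each $\rho_i$ takes values in $\GL_n(\calO_E)$, so that $\rho_i(G_K) \subseteq U := \Id + \mathfrak{p}_E M_n(\calO_E)$ is a pro-$p$ group.

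The core is an induction on $k \geq 1$ showing $\operatorname{CharPoly}(\rho_1(g)) \equiv \operatorname{CharPoly}(\rho_2(g)) \pmod{\mathfrak{p}_E^k}$ for every $g \in G_K$. The obstruction to upgrading equality modulo $\mathfrak{p}_E^k$ to equality modulo $\mathfrak{p}_E^{k+1}$ is encoded by a $G_K$-equivariant map whose kernel corresponds, via Proposition~\ref{prop:correspondence}, to a $p$-elementary extension $F$ of some intermediate field $L$ in the tower, unramified outside~$S$ and Galois over~$K$; Lemmas~\ref{lem:nminus1}, \ref{lem:removeExtension}, \ref{lem:resDegree}, and~\ref{lem:splitting} are used to manipulate the $G$-submodules of $\Sel_p(L, S)$ controlling such extensions. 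Assumption~(i) of the theorem ensures that $\Sigma'$ contains a generator of every maximal cyclic subgroup of $\Gal(F/K) \subseteq \Gal(K_S/K)$, and since $\operatorname{CharPoly}(g^k)$ is determined by $\operatorname{CharPoly}(g)$, assumption~(ii) propagates the desired equality to all of $\Gal(F/K)$ and forces the obstruction to vanish. Once equality modulo every $\mathfrak{p}_E^k$ is established, Brauer--Nesbitt together with $\mathfrak{p}_E$-adic continuity of characteristic polynomials yields $\rho_1^{\mathrm{ss}} \simeq \rho_2^{\mathrm{ss}}$.

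The genuine new ingredient, and the step I expect to be the main obstacle, is checking that the extension $F$ produced at each inductive step actually lies inside the smaller $K_S$: concretely, that $\Gal(F/K)$ has exponent dividing $p^m$. This is where the condition $p^m \geq n$ becomes essential, controlling, via the filtration $U \supset \Id + \mathfrak{p}_E^2 M_n(\calO_E) \supset \ldots$ together with the commutator and power identities in the pro-$p$ group $U$, the exponent of the relevant finite quotients through which $\rho_i$ factors. This compatibility is implicit in parts of Grenié's analysis and is mentioned in \cite[\S 6.2]{duan}, but must be spelled out carefully here to justify stopping the tower at $K_{\lambda + \varepsilon + m}$ under the new exponent constraint rather than requiring a longer tower.
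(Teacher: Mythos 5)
Your outline takes a genuinely different route from the paper, and the route has a gap at its core. You propose an induction on $k$ showing $\operatorname{CharPoly}(\rho_1(g)) \equiv \operatorname{CharPoly}(\rho_2(g)) \pmod{\mathfrak{p}_E^k}$ for all $g$, with the obstruction at each step ``encoded by a $G_K$-equivariant map whose kernel corresponds to a $p$-elementary extension $F$.'' That inductive step is precisely what is \emph{not} available in the residually reducible setting: the difference of traces (or of characteristic-polynomial coefficients) modulo $\mathfrak{p}_E^{k+1}$ is not a group homomorphism when $\rhobar$ is reducible, so it does not cut out an abelian extension via Proposition~\ref{prop:correspondence}. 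Circumventing this failure is the entire reason Livn\'e and Greni\'e introduced the deviation group, and your sketch neither constructs the claimed equivariant map nor explains why the induction --- which runs over infinitely many $k$ --- only ever produces extensions lying inside the fixed finite tower $K_S$ (which has at most $\lambda+\varepsilon+m$ layers). You have also misplaced the difficulty: the exponent-$p^m$ compatibility you flag as the main obstacle is not handled in the paper by commutator estimates in $U$ at each inductive step.

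For comparison, the paper's proof is not a successive-approximation argument at all. It forms the deviation group $\delta(G)$ of $\rho_1\oplus\rho_2$ once and for all, invokes Faltings' criterion (matching traces on a set covering $\delta(G)$ suffices), and then uses two inputs from Greni\'e: matching characteristic polynomials at $g$ forces $\delta(g)$ to have order dividing $p^m$ (his Proposition~9), and a finite $p$-group $H$ whose quotient $H_{\#}=H/H^{\#}$ is covered by images of elements of order dividing $p^m$ satisfies $H^{\#}=1$ (his Lemma~7). The only new content specific to the smaller tower is a case split you omit entirely: if the restricted tower stabilizes early, its maximality gives $K_S=K_{S,p^m}$, whence $\Gal(K_S/K)$ surjects onto $\delta(G)_{\#}$ and $\Sigma'$ covers it; if it does not stabilize, $K_S$ is the original tower of Section~\ref{sec:grenie} and one simply applies Theorem~\ref{thm:Grenie}. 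To repair your write-up you would either need to supply the missing deviation-group mechanism or give a genuinely new argument for why the mod-$\mathfrak{p}_E^{k+1}$ obstruction is controlled by an abelian, exponent-$p^m$, unramified-outside-$S$ extension at every level $k$; as it stands the central step is asserted, not proved.
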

\begin{proof} For a pro-$p$ group~$H$, let $H^\# \subset H$ be the closure of the subgroup generated by the $p^m$ powers in~$H$ and define $H_\# := H/H^\#$; see \cite[Remark 6]{Grenie} for properties of $H^\#$.

Since $G_K$ is compact there is a lattice in $E^n$ stable for the action of $G_K$, so we can assume that $\rho_i$ is valued in $\GL_n(\calO_E)$. By assumption, the image of $\rhobar_i$ is a $p$-subgroup of $\GL_n(\calO_E/\fp_E)$ whose $p$-Sylow subgroups are conjugated to the subgroup $\bar{U}$ of unipotent matrices. So, after replacing $\rho_i$ by a conjugate, we can assume its image belongs to $U \subset \GL_n(\calO_E)$ the subgroup of matrices reducing to~$\bar{U}$. Since the kernel of reduction is pro-$p$ it follows that $U$ is pro-$p$ and
$\rho_i(G_K)$ are closed in $U$ because they are compact, hence $\rho_i(G_K)$ is also pro-$p$. Thus $\rho_i$ factor via $G = \Gal(K_S(p)/K)$ the maximal pro-$p$ extension of $K$ unramified outside $S$.

We have $G / G^{\#} \simeq \Gal(K_{S,p^m} / K)$ where $K_{S,p^m} / K$ is the largest pro-$p$ extension of $K$ unramified outside $S$ with Galois group having exponent a divisor of $p^m$.

We now divide into two cases according to the construction of $K_S$:

(i) Suppose that $K_S = K_i$ with $i < \lambda + \varepsilon + m$.

Clearly, $K_S \subset K_{S,p^m}$. The maximality of $K_{i+1} / K_i$ together with $K_{i+1}=K_i$ implies $K_S = K_{S,p^m}$;
indeed, any $p$-extension~$M/K$ is uniquely constructed as the tower $K = L_0 \subset  L_1 \subset \cdots \subset L_k = M$ where $L_{i+1}$ is the maximal $p$-elementary extension
of~$L_i$ contained in $M$.

We now recall the definition of deviation group.
Let $\rho = \rho_1 \oplus \rho_2$ and consider it as a $\calO_E$-algebra homomorphism
\[
 \rho  : \calO[G] \to M_n(\calO_E) \oplus M_n(\calO_E).
\]
Let $M$ be its image and consider the composition
\[
 \delta : G \to M^\times \to (M/\fp_E M)^\times.
\]
The image $\delta(G)$ is {\it the deviation group associated to $(\rho_1, \rho_2)$}.
It follows from the work of Faltings that the semisimplifications of $\rho_1$ and $\rho_2$ are isomorphic if and only if the traces of $\rho_1$ and~$\rho_2$ agree on a set of elements $\cT$ covering $\delta(G)$; in particular, the extension $K_\delta/K$ fixed by~$\ker \delta$ is unramified outside~$S$; see~\cite[\S5.2]{Chenevert} and \cite[\S3]{duan} for more details.

In the proof of \cite[Proposition 9]{Grenie} it is shown that if $g \in G$ satisfies that $\rho_1(g)$ and $\rho_2(g)$ have the same characteristic polynomial, then $\delta(g)$ has order dividing $p^m$.

Note that the elements of $\delta(G)_{\#}$ are of the form $\overline{\delta(g)}$ for some $g \in G$ where $\bar{}\bar{\cdot}\bar{}$ denotes reduction modulo $\delta(G)^\#$. Suppose there is a set $\cT \subset G$ of elements where
$\rho_1(g)$ and $\rho_2(g)$ have the same characteristic polynomial
covering $\delta(G)_{\#}$. Then, for all $\overline{\delta(g)} \in \delta(G)_\#$, there is $g_0 \in \cT$ such that $\overline{\delta(g)} = \overline{\delta(g_0)}$
with $\delta(g_0) \in \delta(G)[p^m]$. Therefore, by \cite[Lemma 7]{Grenie},
we conclude that $\delta(G)^\#$ is trivial; hence $\delta(G) = \delta(G)_{\#}$ is covered by~$\cT$ and thus $\rho_1$ and $\rho_2$ have isomorphic semisimplifications, as desired.

To finish case (i) we have to show that $\cT = \Sigma'$ covers $\delta(G)_\#$.

Indeed, observe that the characteristic polynomials of $\rho_1(g)$ and $\rho_2(g)$ agree for all $g \in \Sigma'$, since (by assumption) they agree for $g \in \Sigma$  and the characteristic polynomial of $\rho_i(g^k)$ is determined by that of $\rho_i(g)$.
Moreover, it is clear that $\Sigma'$ covers $\Gal(K_S / K)$.

Since $\delta(G)_\#$ has exponent dividing $p^m$ and is isomorphic to
$\Gal(K_{\delta,p^m}/K)$ where $K_{\delta,p^m}/K$ is unramified outside~$S$, it follows from the maximality of $K_S = K_{S,p^m}$ that $\Gal(K_S / K)$ surjects
onto $\Gal(K_{\delta,p^m}/K)$. Thus
$\Sigma'$ covers $\delta(G)_\#$, as desired.

(ii) Suppose $K_S = K_{\lambda+\varepsilon+m}$ is the extension in Section~\ref{sec:grenie}. Then the conclusion follows directly from an application of Theorem~\ref{thm:Grenie} with $T$ the set of primes whose Frobenius elements represent
the elements of $\Sigma'$ viewed as elements of $\Gal(K_S / K)$.
\end{proof}

\begin{remark} \label{rem:traces}
Let $\cT \subset G_K$ be a set of Frobenius elements
representing all the elements of $\Sigma$ viewed as elements of $\Gal(K_S / K)$.
As explained in \cite[Remark 4]{Grenie}, the equality of characteristic polynomials
at $\Frob_t \in \cT$ in assumption (ii) can be checked using only the traces at powers of~$\Frob_t$,  more precisely,
\[
\forall \Frob_t \in \cT \text{ and } 1 \leq k \leq n \text{ check } \Tr \rho_1(\Frob^k_t) = \Tr \rho_2(\Frob^k_t).
\]
This can be very helpful in applications where only traces are quickly accessible;
it is also often the case that we know that the representations  have the same determinant, in which case, it suffices to check the above equality of traces for $k \leq n-1$.
\end{remark}

To finish this section, we will give an equivalent description of the layers $K_{i+1} / K_i$. This is the description that we use in the implementation, following the steps in Section~\ref{sec:steps}. We will need the following auxiliary results.

Recall that $L/K$ is Galois and $S \supset S_p$ is a finite set of primes in $K$.
Since $L/K$ is Galois, for all primes $\fq$ in $K$, the residual degree $f(\fq_L | \fq)$ is independent of $\fq_L \mid \fq$ in $L$.

\begin{lemma} \label{lem:smaller}
Let $\fq \not\in S$ be a prime in~$K$ satisfying $f(\fq_L | \fq) \leq p^m$.
Then there is a maximal elementary $p$-extension~$M_\fq/L$ unramified outside~$S$ and such that  $f(\cQ | \fq) \leq p^m$ at all primes~$\cQ \mid \fq$ in~$M_\fq$. Moreover, the extension $M_\fq/K$ is Galois.
\end{lemma}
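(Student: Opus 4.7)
The plan is to realize $M_\fq$ via the Kummer correspondence of Proposition~\ref{prop:correspondence} applied to a $G$-submodule of $\Sel_p(L,S)$ cut out by a local condition at $\fq$, where $G := \Gal(L/K)$. Set $f_L := f(\fq_L | \fq)$, which is independent of the choice of $\fq_L | \fq$ since $L/K$ is Galois, and satisfies $f_L \leq p^m$ by hypothesis. Define
\[
N := \{\alpha \in \Sel_p(L,S) : f(\cQ|\fq) \leq p^m \text{ for every prime } \cQ | \fq \text{ in } L(\sqrt[p]{\alpha})\}.
\]

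By Lemma~\ref{lem:resDegree}, for a prime $\cQ | \fq_L$ in $L(\sqrt[p]{\alpha})$ one has $f(\cQ|\fq_L) \in \{1,p\}$, and since $f(\cQ|\fq) = f(\cQ|\fq_L) \cdot f_L$, the defining condition of $N$ is automatic when $f_L \leq p^{m-1}$ and equivalent to $\fq_L$ splitting completely in $L(\sqrt[p]{\alpha})/L$ for every $\fq_L | \fq$ when $f_L > p^{m-1}$. Because $L$ contains the $p$-th roots of unity and $\fq \nmid p$, this splitting condition is purely local and presents $N$ as the kernel of the natural homomorphism $\Sel_p(L,S) \to \bigoplus_{\fq_L | \fq} L_{\fq_L}^\times/(L_{\fq_L}^\times)^p$, so $N$ is a subgroup. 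It is moreover $G$-stable: since $G$ permutes the primes above $\fq$, saying that $g(\alpha)$ is a local $p$-th power at $\fq_L$ is equivalent to saying that $\alpha$ is a local $p$-th power at $g^{-1}(\fq_L)$, which is one of the primes above $\fq$ and so covered by the condition $\alpha \in N$.

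Applying Proposition~\ref{prop:correspondence} then produces a $p$-elementary extension $M_\fq/L$, unramified outside $S$ and with $M_\fq/K$ Galois. The residual degree bound carries over to $M_\fq$: when $f_L \leq p^{m-1}$ it holds automatically since $f(\cQ|\fq) \leq p f_L \leq p^m$; when $f_L > p^{m-1}$, every $\fq_L | \fq$ splits completely in each generator $L(\sqrt[p]{\alpha_i})$ of $M_\fq$ and hence, by iterating Lemma~\ref{lem:splitting}, also in $M_\fq$, yielding $f(\cQ|\fq) = f_L \leq p^m$.

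Finally, for maximality let $F/L$ be any $p$-elementary extension unramified outside $S$ with $f(\cQ|\fq) \leq p^m$ at every $\cQ | \fq$ in $F$, and write $F = L(\sqrt[p]{\beta_1}, \ldots, \sqrt[p]{\beta_s})$ with $\beta_i \in \Sel_p(L,S)$. For each $i$ and each prime $\cQ | \fq$ in $L(\sqrt[p]{\beta_i})$, any prime $\cQ_F | \cQ$ in $F$ satisfies $f(\cQ|\fq) \mid f(\cQ_F|\fq) \leq p^m$, so $\beta_i \in N$ by definition and hence $F \subseteq M_\fq$. The most delicate step is the verification that $N$ is a $G$-submodule rather than merely a subgroup, which hinges on recognising that the local obstructions at the primes above $\fq$ assemble into a single $G$-equivariant constraint because $G$ acts on the set of such primes.
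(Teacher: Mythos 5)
Your proof is correct, and its skeleton matches the paper's: the same case split on whether $f(\fq_L\mid\fq)$ is strictly below $p^m$ (bound automatic by Lemma~\ref{lem:resDegree}) or equal to it (bound forces complete splitting of $\fq_L$), and the same use of Lemma~\ref{lem:splitting} to propagate complete splitting to a compositum. Where you genuinely diverge is in how maximality and the Galois property over $K$ are obtained. The paper defines $M_\fq$ abstractly as the compositum of all $p$-elementary $L_i/L$ in which every $\fq_L\mid\fq$ splits completely, gets maximality for free from that definition, and proves $M_\fq/K$ Galois by observing that $\sigma(L_i)$ is again such an extension, hence lies in $M_\fq$. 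You instead identify the relevant object on the Selmer side from the start, exhibiting $N$ as the kernel of the localization map $\Sel_p(L,S)\to\bigoplus_{\fq_L\mid\fq}L_{\fq_L}^\times/(L_{\fq_L}^\times)^p$, checking $G$-stability via the permutation action on the primes above $\fq$, and then invoking Proposition~\ref{prop:correspondence} for the Galois property; maximality then requires the extra (but standard Kummer-theoretic) step of writing an arbitrary admissible $F$ in terms of Selmer elements $\beta_i$ and noting each $\beta_i\in N$. Your version buys an explicit, directly computable description of $M_\fq$ -- essentially what the algorithm in Section~\ref{sec:steps} implements by elimination -- at the cost of leaning on the Kummer correspondence in both directions, whereas the paper's compositum argument is shorter and self-contained at the level of fields.
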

\begin{proof} We consider two cases:

(i) Suppose $f(\fq_L | \fq) = p^k < p^m$. Then, by Lemma~\ref{lem:resDegree}, we have $f(\cQ | \cQ \cap \O_K) \leq p^{k+1} \leq p^m$ for all $\cQ \mid \fq$ in any $p$-elementary extension of~$L$. In this case, we take $M_\fq$ to be the maximal $p$-elementary extension of $L$ unramified outside~$S$.

(ii) Suppose $f(\fq_L | \fq) = p^m$. Consider finitely many $p$-elementary extensions $L_i / L$  unramified outside~$S$ and such that all $\fq_L \mid \fq$ split completely in all the~$L_i$. Let $M'$ be their compositum. By Lemma~\ref{lem:splitting}, $\fq_L$ splits completely in $M'$ therefore $f(\cQ | \fq) = p^m$ for all $\cQ \mid \fq$ in $M'$.

Since the $L_i / L $ above are contained in the maximal $p$-elementary extension of $L$ unramified outside $S$, there are only finitely many possibilities for $L_i$. Let $M_\fq$ be the compositum of them all. From the above $f(\cQ | \fq) \leq p^m$ for all $\cQ \mid \fq$ in $M_\fq$ so $M_\fq/L$ is $p$-elementary and maximal with respect to this property (this also implies that $M_\fq$ is among the $L_i$). Note that, for all $\sigma \in G_K$, the ideal $\sigma(\fq_L)$ splits completely in the $p$-elementary extension $\sigma(L_i)/L$. So $\sigma(L_i) \subset M_\fq$ by maximality thus $M_\fq/K$ is Galois.
\end{proof}

\begin{corollary}\label{cor:smaller}
There is a maximal elementary $p$-extension~$M/L$ unramified outside~$S$
satisfying that  $f(\cQ | \cQ \cap \O_K) \leq p^m$ at all unramified primes~$\cQ$ in~$M$.
Moreover, $M/K$ is Galois.
\end{corollary}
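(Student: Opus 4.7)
The approach is to glue together the local extensions $M_\fq$ produced by Lemma~\ref{lem:smaller} into a single extension $M/L$ that handles all primes at once. The key finiteness observation is that, since the Selmer group $\Sel_p(L,S)$ is finite, the maximal $p$-elementary extension of~$L$ unramified outside~$S$, call it $L_S$, is finite over~$L$. Consequently $L_S/L$ has only finitely many subextensions, so as $\fq$ varies over all primes of~$K$ the family $\{M_\fq\}$ contains only finitely many distinct fields.

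I would then define
\[
M \; := \; \bigcap_{\fq} M_\fq,
\]
where the intersection ranges over the primes $\fq$ of~$K$ outside $S$ to which Lemma~\ref{lem:smaller} applies, i.e.\ those with $f(\fq_L|\fq)\leq p^m$. Because this is a finite intersection of intermediate fields of the $p$-elementary extension $L_S/L$, the extension $M/L$ is again $p$-elementary and unramified outside~$S$. Moreover, the intersection of the Galois extensions $M_\fq/K$ is Galois over~$K$, which settles the last claim of the corollary. For the residue-degree condition, let $\cQ$ be a prime of~$M$ unramified over~$K$ and set $\fq := \cQ\cap\O_K$. Then $\fq\notin S$ and $L/K$ is unramified at~$\fq$, so $f(\fq_L|\fq)$ is the order of a Frobenius element in the $p$-group $\Gal(L/K)$; in the setting in which the corollary is used this order is bounded by~$p^m$ (since $\Gal(K_i/K)$ has exponent dividing~$p^m$, maintained inductively through the tower), so $\fq$ appears in the intersection. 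Since $M\subset M_\fq$, passing to any prime of~$M_\fq$ above~$\cQ$ gives $f(\cQ|\fq)\leq p^m$.

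Maximality then follows formally from the maximality built into each $M_\fq$: if $M'\subset L_S$ is any $p$-elementary extension of~$L$ which is Galois over~$K$ and satisfies the residue-degree condition, then $M'$ fulfils the hypothesis of Lemma~\ref{lem:smaller} at every $\fq$ in the intersection, hence $M'\subset M_\fq$ for each such $\fq$, and therefore $M'\subset M$. The one technical point to watch is the possibility of primes $\fq\notin S$ with $f(\fq_L|\fq)> p^m$: for such primes no $p$-elementary extension of~$L$ could lower the residue degree, and they would obstruct the existence of a nontrivial~$M$. This pathology does not occur in the actual application, because the corollary is used only within the inductive construction of the layers $K_i$, where the exponent bound on $\Gal(K_i/K)$ at each step guarantees that every prime outside $S$ satisfies $f(\fq_L|\fq)\leq p^m$.
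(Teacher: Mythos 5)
Your proof is correct and follows essentially the same route as the paper: intersect the extensions $M_\fq$ produced by Lemma~\ref{lem:smaller}, observe that only finitely many distinct fields occur because the maximal $p$-elementary extension of $L$ unramified outside $S$ is finite over $L$, and conclude Galoisness over $K$ from the fact that an intersection of Galois extensions is Galois. Your additional remark that the statement implicitly requires $f(\fq_L \mid \fq) \leq p^m$ for every $\fq \notin S$ --- guaranteed in the application by the exponent bound maintained through the tower --- is a point the paper's proof leaves tacit, but it does not alter the argument.
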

\begin{proof}
Let $S'$ be a finite set of primes in $K$ such that $S' \cap S$ is empty.
For each $\fq \in S'$, we obtain an extension $M_\fq$ from Lemma~\ref{lem:smaller}.
Consider the extension $M' /L$ obtained by intersecting the $M_\fq$ for all primes in $S'$.
Finally enlarging $S'$ will make $M'$ shrink and eventually converge to the desired
$M / L$. The last statement follows because the intersection of Galois extensions is Galois.
\end{proof}

We can now give an equivalent description for $K_{i+1}/K_i$.
Indeed, let $K_0 = K$ and define by $K_{i+1}$ to be the extension obtained by applying Corollary~\ref{cor:smaller} with $L = K_i$. Since every element in $\Gal(K_{i+1}/K)$ can be thought as a Frobenius element $\Frob_{\cQ \mid \fq}$ whose order is $f(\cQ | \fq) \leq p^m$, it is clear that both descriptions give the same field $K_{i+1}$.

\section{The construction of \texorpdfstring{$K_S$}{}} \label{sec:steps}
Let $n \geq 2$ and $p$ be a prime. Let $m$ be the smallest integer such that $p^m \geq n$. Let $K$ be a number field containing the $p$-th roots of unity and $S \supset S_p$ a finite set of primes in $K$.
The algorithm to construct $K_S$ is the following.

{\bf Part A: The first $m$ layers.}
Take $K_0 = K$. For $0 \leq i \leq m-1$ we let $K_{i+1}$ be the maximal $p$-elementary extension of~$K_i$ unramified outside~$S$. Explicitly, $K_{i+1} = K_i(\sqrt[p]{\alpha_1},\dots, \sqrt[p]{\alpha_d})$ where $\alpha_i$ are generators of $\Sel_p(K_i,S)$.
From Lemma~\ref{lem:resDegree} the residual degree at unramified primes increases at most by $p$ in each layer, so the maximal residual degree in $K_m$ is $\leq p^m$; thus all unramified primes in $K_m$ respect the residual degree bound $p^m$.

{\bf Part B: The layers $K_{i+1}$ for $i \geq m$.}
We need to make sure that the residual degree bound $p^m$ is preserved at each layer (see Corollary~\ref{cor:smaller}). This requires various steps.

{\bf Step 1: Computing the $\Gal(K_i/K)$-submodules.}
Let $L / K_i$ be the maximal $p$-elementary extension unramified outside $S$. We know that $K_{i+1} \subset L$ and $K_{i+1} / K$ is Galois by Corollary~\ref{cor:smaller}. Hence, by Proposition~\ref{prop:correspondence}, there is a $\Gal(K_i/K)$-submodule $N \subset \Sel_p(K_i,S)$ corresponding to~$K_{i+1}$, hence we compute all $\Gal(K_i/K)$-submodules of $\Sel_p(K_i,S)$.

To identify the submodule $N$ we will discard all submodules that do not respect the residual degree bound~$p^m$. We choose a working bound~$B_i$.

{\bf Step 2: Eliminating the 1-dimensional submodules.}
From Lemma~\ref{lem:nminus1} there is a 1-dimensional
$\Gal(K_i/K)$-submodule of~$N$ corresponding (by Proposition~\ref{prop:correspondence})
to a subfield $K_i(\sqrt[p]{\alpha}) \subset K_{i+1}$ such that $K_i(\sqrt[p]{\alpha}) / K$ is Galois.
 For each (previously computed) 1-dimension submodule~$M = \langle \alpha_1 \rangle$ we do the following. For each prime~$q < B_i$ we consider a prime $\cQ \mid q$ in
$K_i(\sqrt[p]{\alpha_1})$ and compute the residual degree $f(\cQ \mid \cQ \cap \O_K)$. If  $f(\cQ \mid \cQ \cap \O_K) >  p^m$ then we discard~$M$ (the choice of~$\cQ$ does not matter since $K_i(\sqrt[p]{\alpha_1}) / K$ is Galois).

Note any $\cQ$ in $K_{i+1}$ satisfying $f(\cQ \mid \cQ \cap \O_K) > p^m$ also satisfies $f(\cQ \cap \O_{K_i} \mid \cQ \cap \O_K) = p^m$, because by Lemma~\ref{lem:resDegree} the residual degree increases at most by~$p$ in $p$-extensions; we use this last condition to narrow the search for~$\cQ$.

Let $\cM_1$ be the set of 1-dimensional $\Gal(K_i/K)$-submodules that were not eliminated. In particular, the extensions of~$K$ corresponding to 1-dimensional submodules not in $\cM_1$ do not respect the residual degree bound at some prime $\cQ \mid q$ where $q < B_i$ is a prime.

{\bf Step 3: Eliminating submodules of dimension $> 1$.}
Suppose there is a dimension 2 submodule $M \subset N$.
From Lemma~\ref{lem:nminus1} we know that $M$ contains a 1-dimensional submodule and
from Lemma~\ref{lem:removeExtension} it cannot contain any 1-dimensional
submodule not in~$\cM_1$. This eliminates all dimension 2 submodules except those for which all dimension 1-submodules are in $\cM_1$. For each dimension 2 submodule $M = \langle \alpha_1, \alpha_2 \rangle$ not eliminated,
we proceed as above, that is, we search for primes $\cQ \mid q$ in $K_i(\sqrt[p]{\alpha_1},\sqrt[p]{\alpha_2}) / K$ with $f(\cQ \mid \cQ \cap \O_K) >  p^m$.

We let $\cM_2$ be the set of the 2-dimensional $\Gal(K_i/K)$-submodules that were not eliminated
and repeat the above process with the 3-dimensional submodules. We continue
until we eliminate all submodules in a single dimension or we reach the maximal dimension.

{\bf Step 4: Constructing the field $K_{i+1}$.} If submodules $M_{d_1}$ and $M_{d_2}$
of dimensions~$d_1$ and~$d_2$, respectively, survived the elimination procedure in steps 2 and 3 then the submodule generated by the union of the generators of the $M_{d_i}$
has dimension $ \leq d_1 + d_2$ and will also survive the elimination procedure (apply Lemma~\ref{lem:splitting} to the corresponding extensions). Therefore, if the elimination procedure terminated because we discarded all submodules of dimension $k+1$ this means that in dimension $k$ there was only 1 submodule left to eliminate; since in the maximal dimension there is a unique submodule, the described elimination procedure always ends with a uniquely identified submodule $N$.
We take $K_{i+1} / K_i$ to be the extension corresponding to $N$.

\begin{remark} The extension $K_{i+1} / K_i$ obtained by the above algorithm can be larger than necessary if the bound $B_i$ is not large enough. In that case $K_{i+1}$ is the intersection of finitely many extensions $M_\fq / K_i$ given by Lemma~\ref{lem:smaller} but it is different from the extension~$M$ given by Corollary~\ref{cor:smaller} that we are looking for. Regardless, $K_{i+1}$ is still
contained in the field~$K_S$ defined in \S\ref{sec:grenie}, $K_{i+1}/K$ is Galois and contains~$M / K$ as a quotient. Therefore, when the algorithm stops, it produces a field that contains the $K_S$ from Theorem~\ref{thm:smaller} as a quotient, hence it can be used to produce a set~$\Sigma$ to apply Theorem~\ref{thm:smaller}.
\end{remark}

\begin{remark} \label{rem:eliminate}
To eliminate submodules of dimension $\geq 2$ we can still restrict ourselves to only work wih extensions of the form $K_i(\sqrt[p]{\alpha})$ without ever needing to compute poly-$p$-extensions which would become of impractical degree very fast.
For example, say that we want to eliminate the $\Gal(K_i / K)$-submodule
$M = \langle \alpha_1, \alpha_2 \rangle$ that contains $\langle \alpha_1 \rangle \in \cM_1$ but is not the compositum of two submodules in $\cM_1$.
Then, if the prime $\cQ \mid q$ in $K_i(\sqrt[p]{\alpha_1}, \sqrt[p]{\alpha_2})$
satisfies $f(\cQ \mid \cQ \cap \O_K) >  p^m$ this means that $f(\cQ \cap \O_{K_i(\sqrt[p]{\alpha_2})} \mid \cQ \cap \O_K)  >  p^m$. Therefore, in general, we only need to determine residual degrees of extensions of the form $K_i(\sqrt[p]{\alpha}) / K_i$. Note however these are not Galois over~$K$, otherwise they correspond to submodules of dimension 1 which have survived the elimination.
\end{remark}

\begin{remark} Using the {\tt Magma} routines {\tt Submodules} and {\tt SubmoduleLattices} is the straightforward way to enumerate and order by inclusion the $G = \Gal(K_i/K)$-submodules of the Selmer group $M = \Sel_p(K_i,S)$
required to construct the layer $K_{i+1}/K_i$. These routines implement the very fast MeatAxe algorithm, but they keep all the submodules in memory, which becomes prohibitive even for small $n$, $p$ and~$S$.
For example, for the calculations in~\S\ref{sec:exmod3} there are more than 20 million submodules which requires about 52GB of RAM; furthermore, in practice, we often only need a small subset of all submodules, and mainly of small dimensions.
To make the algorithm amenable to computers with limited memory, we implemented two functions that allow us to work inductively as follows. One function computes all the 1-dimension submodules and another computes all submodules of dimension $d$ containing a specific submodule of dimension~$d-1$. More precisely, let $N \subseteq M$ is a submodule of dimension $d-1$.
Then the submodules of $M$ of dimension $d$ containing $N$ are in bijection with the one-dimensional $G$-invariant subspaces of the quotient $M/N$.
These subspaces can be obtained by computing eigenvectors for the $G$-action in $M/N$; note that since $G$ is a $p$-group all eigenvalues are 1 as $G$ acts unipotently by the same argument as in the beginning of the proof of Theorem~\ref{thm:smaller}. This inductive procedure avoids storing the entire lattice of submodules while still recovering all the relevant submodules systematically; although it is not as fast as a full MeatAxe computation, in practice it remains extremely fast and vastly more memory-efficient; for example, the total memory required for the computations in \S\ref{sec:exmod3} is reduced to around 600MB.
\end{remark}

\section{The example of Greni\'e}
\label{sec:exmod2}

In~\cite[Proposition~3.1]{vanGeemenTop}, van Geemen and~Top conjectured an equivalence of two representations, one arising on the cohomology of a surface and the other an automorphic representation for~$\GL_{3,\Q}$; this led Greni\'e to establish Theorem~\ref{thm:Grenie} and prove their conjecture.

In this section, we apply our algorithm to this same example and, as expected, we reach the same conclusion as Greni\'e, that is, Corollary~\ref{cor:2adic}.

We run our algorithm setting the number of threads in Magma to 4. Assuming GRH, it takes around 30h to run the procedure \texttt{ExampleOfGrenie} in the main file in~\cite{git}. However, if we only want to computationally back up the claims done by Greni\'e the required time is less, as we already have polynomials defining the layers provided in~\cite{Grenie}; we also provide code for this in~\cite{git}.
Next we describe the results and summarize the calculations.

This is a 3-dimensional example with $p=2$, $K=\QQ$, $S=\{2\}$ and $m=2$ (since $p^2 \geq 3$), hence the bound on the residual degree for the extension~$K_S / K$ is $p^2 = 4$.


Since $m=2$, in Part A of the algorithm (see~\S\ref{sec:steps}), we construct two layers that are maximal with respect to the previous layer without having the residual degree bound into account.

Indeed, $K_0 = \QQ$ and $K_1 = \QQ(\sqrt{2}, i)$ is the maximal $2$-elementary extension of $\QQ$ unramified outside $\{2\}$. The the 2-Selmer group $\Sel_2(K_1,\{2\})$ is a 3-dimensional $\F_2$-vector space. The extension given by adjoining all the square roots of the 3 generators is isomorphic to:
\[x^{32} - 8x^{30} + 40x^{28} - 120x^{26} + 160x^{24} - 136x^{22} + 200x^{20} - 376x^{18} + 1442x^{16} - 904x^{14}\]\[ - 1528x^{12} + 1992x^{10} + 1056x^{8} - 456x^{6} - 24x^{4} + 8x^{2} + 1.\]
The Galois group $\Gal(K_2 / \Q)$ is isomorphic to $C_2^3\rtimes C_4$. The Galois group $\Gal(K_2 / K_1)$ is~$C_2^3$ which is isomorphic to the full $2$-Selmer group $\Sel_2(K_1,\{2\})$ as expected.

We now proceed to Part B to compute $K_i$ for $i > m=2$.

The $2$-Selmer group $\Sel_2(K_2,\{2\})$ is a 17-dimensional $\F_2$-vector space.
Following Step~1 of Part~B, we compute all the $\Gal(K_2/K)$-submodules of $\Sel_2(K_2,\{2\})$, obtaining the following number of submodules per dimension.
\[
\begin{array}{|c|c|c|c|c|c|c|c|c|c|c|c|c|c|c|c|c|c|}\hline
\text{Dimension}&1&2&3&4&5&6&7&8&9&10&11&12&13&14&15&16&17\\\hline
\text{\# of submodules}&7&19&27&35&75&107&99&99&123&91&39&31&27&7&3&1&1\\\hline
\end{array}
\]
To eliminate submodules, we choose the working bound $B_3=100$; the rational primes $q \leq B_3$ satisfying~$f(\mathcal{Q}/q) = p^m = 4$ for $\cQ \mid q$ in $K_2$ are
$\{ 3, 5, 7, 11, 13, 19, 23, 29, 37, 43, 53, 59, 61, 67, 71\}$.

In Step 2 there is only one dimension~1 submodule that survives elimination.
This submodule is contained in four dimension 2 submodules that contain no other dimension 1 submodules. We eliminate these four following the procedure in Remark~\ref{rem:eliminate}, completing Step 3. This yields the layer $K_3$, isomorphic to the field defined by the polynomial
\[\begin{aligned}
&x^{64} - 16x^{62} + 104x^{60} - 304x^{58} + 344x^{56} + 496x^{54} + 568x^{52} - 48x^{50} + 1248x^{48} + 10336x^{46} \\
&+ 11720x^{44} + 17536x^{42} + 14168x^{40} + 52608x^{38} + 27320x^{36} - 19520x^{34} - 8414x^{32} + 50224x^{30} \\
&+ 243496x^{28} - 208624x^{26} - 259016x^{24} + 244784x^{22} + 175544x^{20} - 204656x^{18} - 4384x^{16}  \\
&+ 55712x^{14} - 8248x^{12} - 9920x^{10} + 4472x^{8} - 576x^{6} - 8x^{4} + 1.
\end{aligned}\]
\begin{remark}
    This polynomial is different from the one provided by Greni\'e  in~\cite{Grenie}. We have added a file in~\cite{git} that checks that both fields are isomorphic. Moreover, we should point out that Magma produces a different polynomial for each run, that is why we find an isomorphism at runtime with the output of Magma and the number field defined by the polynomial above.
\end{remark}
\begin{remark} Observe that the layer $K_3$ as described in Section~\ref{sec:grenie}
would be a degree $2^{17}\cdot 32$ extension of~$\Q$, which is unfeasible to compute.
\end{remark}

We have $\Gal(K_3 / \Q) \simeq C_4^2\rtimes C_4$. 
The algorithm does one final step, in order to find $K_4$. There are seven 1-dimensional $\Gal(K_3/\QQ)$-submodules of $\Sel_2(K_3,\{2\})$, however we are able to eliminate all of them, proving that $K_4 = K_3$, so the tower stabilizes and $K_S = K_3$.

\begin{remark}\label{rem:Fratini}
In~\cite{Grenie}, Greni\'e's argues that $K_4=K_3$ because there is no group of order 128 with an appropriate Frattini subgroup; this works as a sanity check for our algorithm.
\end{remark}

To finish we need to produce a list of primes $\cT$ whose corresponding Frobenius elements give the set $\Sigma$ satisfying (i) in Theorem~\ref{thm:smaller}, and check that (ii) is also satisfied. Alternatively, as explained in
Remark~\ref{rem:traces}, we can obtain a second (larger) list $\cT'$ of primes that allows
to conclude using only traces instead of characteristic polynomials.

We describe the methods to compute $\cT$ and $\cT'$.

Let $g, g', h \in \Gal(K_S / \Q)$ satisfy $g' = h g h^{-1}$. Let $\tilde{h} \in G_\Q$
be a lift of $h$ and $\Frob_t \in G_\Q$ act as~$g$ on $\Gal(K_S / \Q)$. Then
$s:= \tilde{h} \cdot \Frob_t \cdot \tilde{h}^{-1}$ lifts $g'$
and the characteristic polynomials of $\rho_1(s)$
and $\rho_2(s)$ are equal if and only if those of
$\rho_1(\Frob_t)$ and $\rho_2(\Frob_t)$ are equal.

Having computed~$G = \Gal(K_S/\Q)$, one computes its maximal cyclic subgroups $C_1,\dots, C_r$. From the previous paragraph, we can assume that the list $\{ C_i \}$
does not contain any two conjugate subgroups. Finally,
for each $1\leq i \leq r$, we find a rational prime $t$ such that $\Frob(\mathfrak{t}/t)$ generates~$C_i$ for some prime $\mathfrak{t} \mid t$ in~$K_S$. It is important to note that this list might not be the same as the one given in~\cite{Grenie}, since one can choose $t$ such that $\Frob(\mathfrak{t}/t)$ generates $C_i$ or any of its $\Gal(K_S/\Q)$-conjugates. We find six maximal cyclic subgroups up to conjugation, five of order 4 and one of order 2, having generators covered by Frobenius at
\[\cT = \{5,7,11,17,23,31\}.\]
Finally, to conclude that the semisimplifications of $\rho_1$ and~$\rho_2$ are isomorphic via Theorem~\ref{thm:smaller}, we have to check the equality of characteristic polynomials of $\rho_1(\Frob_t)$ and $\rho_2(\Frob_t)$ for all~$t \in \cT$; this was already done
by van Geemen and Top in~\cite[Proposition 3.11]{vanGeemenTop}.

Alternatively, to check the equivalence of representations using only traces at a set of primes~$\cT'$, the procedure is similar enough. Once computed (up to conjugation) generators $g_1,\dots, g_r$ of every maximal cyclic subgroup of $\Gal(K_S/\Q)$,
we need to cover the elements~$g_j^k$ for all $1 \leq j \leq r$ and all $1 \leq k \leq n$ with Frobenius elements (see Remark~\ref{rem:traces}). Again,
we can find Frobenius elements covering any $\Gal(K_S/\Q)$-conjugate of~$g_j^k$, as the traces of $\rho_i$ are invariant by conjugation. This process yields the list of primes
\[\cT' = \{5,7,11,17,19,23,31,73,137,257,337\}.\]

\begin{remark}
Notice how the list is not of size $5\cdot n + 2 = 15+2 = 17$, since there are elements that are shared in the cyclic subgroups (i.e. $C_i\cap C_j$ may be non-empty).
\end{remark}

\section{Modularity of abelian surfaces via 3-adic representations}
\label{sec:exmod3}

Modularity lifting theorems in the residually reducible case are scarce and often have many hypothesis. Our implementation of Theorem~\ref{thm:smaller} can be used to circumvent this scarcity and prove modularity of particular elliptic curves or certain abelian varieties when no general modularity theorem applies. We illustrate this by studying the modularity of the abelian surfaces listed in the LMFDB with good reduction away from 3; at the time of writing there are two such surfaces having conductors $3^7$ and $3^{10}$.


The following proposition is used below to show that the residual image hypothesis of Theorem~\ref{thm:smaller} is met.
In particular, the images are conjugated to $\left(\begin{smallmatrix} 1 & * \\ 0 & 1 \end{smallmatrix} \right)$ and so all characteristic polynomials are $(t-1)^2 \pmod{3}$ (compare with hypothesis (1) of Theorem~\ref{thm:Grenie}).

\begin{proposition}\label{prop:reducible} Let $K=\Q(\sqrt{-3})$ and $\rhobar : G_K \to \GL_2(\F_3)$ be a continuous representation ramified only at the unique prime~$\fq_3 \mid 3$ in $K$. Then $\rhobar(G_K)$ is trivial or isomorphic to $C_3$.
\end{proposition}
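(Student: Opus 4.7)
The plan is to show that the semisimplification $\rhobar^{\mathrm{ss}}$, taken over $\overline{\F}_3$, is trivial; this forces $\rhobar$ to be unipotent and hence its image to land in a conjugate of $\left(\begin{smallmatrix} 1 & * \\ 0 & 1 \end{smallmatrix}\right) \cong C_3$.

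First I compute, via class field theory, the maximal abelian extension of $K$ unramified outside $\fq_3$. Writing $\pi = 1 - \zeta_3$ so that $\fq_3 = (\pi)$, I use $\Cl(K) = 1$ and $\calO_{K,\{\fq_3\}}^* = \langle \zeta_6, \pi \rangle$. The local completion $K_{\fq_3} = \Q_3(\zeta_3)$ has degree $2$ over $\Q_3$, so $\calO_{K_{\fq_3}}^* \cong \mu_6 \times \Z_3^2$ (with the $\Z_3^2$ factor coming from the principal units). A standard idelic computation then yields
\[
\Gal(K^{\mathrm{ab},\{\fq_3\}}/K) \cong K_{\fq_3}^*/\calO_{K,\{\fq_3\}}^* \cong \Z_3^2,
\]
which is pro-$3$.

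Next I observe that any continuous character $\chi : G_K \to \overline{\F}_3^*$ unramified outside $\fq_3$ factors through this pro-$3$ abelianization. Since $\overline{\F}_3^* = \bigcup_n \F_{3^n}^*$ is a torsion group of order prime to $3$, any such $\chi$ is trivial. Decomposing $\rhobar^{\mathrm{ss}}$ over $\overline{\F}_3$ as a sum of two characters therefore forces $\rhobar^{\mathrm{ss}}$ itself to be trivial. Consequently $\rhobar$ is unipotent: after a base change defined over $\F_3$, its image is contained in $\left(\begin{smallmatrix} 1 & * \\ 0 & 1 \end{smallmatrix}\right) \cong \F_3$, and so is either trivial or $C_3$.

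The only substantive step is the class field theory computation in the first paragraph; once the abelianization is known to be pro-$3$, the remaining reduction from $\rhobar^{\mathrm{ss}}$ to the unipotent form is formal. (Note in particular that no case analysis of subgroups of $\GL_2(\F_3)$ is needed: the argument via characters avoids having to rule out potentially problematic images such as $\SL_2(\F_3)$ by hand.)
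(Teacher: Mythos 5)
There is a genuine gap. Your argument establishes correctly that every continuous character $\chi : G_K \to \overline{\F}_3^{*}$ unramified outside $\fq_3$ is trivial (the class field theory computation giving a pro-$3$ abelianization is fine), and that \emph{if} $\rhobar^{\mathrm{ss}}$ decomposes as a sum of two characters then $\rhobar$ is unipotent with image trivial or $C_3$. But the decomposition of $\rhobar^{\mathrm{ss}}$ into two characters is exactly the assertion that $\rhobar$ is absolutely reducible, and you never prove it. If $\rhobar$ is irreducible over $\overline{\F}_3$ — e.g.\ if $\rhobar(G_K) \cong \SL_2(\F_3)$ — then $\rhobar^{\mathrm{ss}} = \rhobar$ is a $2$-dimensional irreducible and the character argument says nothing about it. Your closing parenthetical, claiming that the character argument ``avoids having to rule out potentially problematic images such as $\SL_2(\F_3)$,'' is precisely where the proof breaks down: that is the case your argument does not touch, and it is the main content of the proposition.

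Moreover, the irreducible case cannot be excluded by abelian information over $K$ alone. A surjection $G_K \twoheadrightarrow \SL_2(\F_3)$ has abelianization $C_3$, which is perfectly compatible with $\Gal(K^{\mathrm{ab},\{\fq_3\}}/K) \cong \Z_3^2$; to kill it one must show that the kernel of $\SL_2(\F_3) \twoheadrightarrow C_3$ (a copy of $Q_8$) cannot be realized, i.e.\ that the cubic extensions of $K$ unramified outside $\fq_3$ admit no further quadratic extensions unramified outside $3$. This is what the paper does: it reduces to subgroups of $\PSL_2(\F_3) \cong A_4$ and then checks, via ray class group computations over $K$ \emph{and over each of the four cubic fields}, that no quadratic layer exists (observations (1)--(3) in the paper's proof). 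Your proof needs an analogous non-abelian step before the reduction to characters is legitimate.
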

\begin{proof}
Since the determinant of $\rhobar$ is a character valued in $\F_3^*$  it fixes an extension $M/K$ of degree dividing~2 that is unramified outside~$\fq_3$. We shall shortly see that such a quadratic extension does not exist, thus $\det~\rhobar$ is trivial.

Note that $\rhobar$ is reducible if and only if its projectivisation $\PP\rhobar : G_K \to \PSL_2(\F_3) \simeq A_4$
is reducible. We claim that the only non-trivial Galois extensions $L/K$ unramified outside~$\{\fq_3\}$ with $\Gal(L/K)$ isomorphic to a subgroup of $A_4$ are cyclic extensions of degree 3. The result follows.

We now prove the claim. First observe that the subgroups of
$A_4$ are isomorphic to $\{1\}$, $C_2$, $C_3$, $C_2 \times C_2$
or a subgroup $H$ of order 12 having a normal subgroup isomorphic to $C_2 \times C_2$.

The claim follows from the following observations:

(1) $K$ has no quadratic extensions unramified away from $\fq_3$; indeed, the ray class group of~$K$ and modulus $\fq_3^3$ is cyclic of order 3.

(2) $K$ has exactly 4 cubic extensions unramified away from $\fq_3$;
indeed, $\Sel_3(K,\{\fq_3\}) \simeq C_3 \times C_3$ has exactly 4 order 3 subgroups; since $\zeta_3 \in K$ each subgroup gives rise to a Galois extension.

(3) none of the four extensions in (2) has quadratic extensions unramified away from 3; indeed, for each cubic $L/K$ in (2) a ray class group computation as in (1) yields the conclusion.

Alternatively to (1)--(3), a quick search using LMFDB confirms that the only extensions of~$K$ of degree dividing $12$ and unramified away from $\fq_3$ have degree 3.

Finally, since $\PP\rhobar(G_K) \simeq C_3$ we have $\rhobar(G_K) \simeq C_3$ or $C_6$ but the computation in (3) shows the latter case is not possible.
\end{proof}

\begin{remark}
The previous proposition was originally told to us by John Cremona who proved it
using his algorithm for finding all extensions of a number field which are unramified outside any finite set of primes, and having Galois group in a list which includes all subgroups of $A_4$. A more general algorithm that, in particular, determines whether a Galois  representation valued in~$\GL(\F_3)$ is reducible was developed by
Cremona--Sanna and is described in Chapter 3 of \cite{mattia}.
\end{remark}

\subsection{A surface of conductor $3^7$
}
Let $K = \Q(\sqrt{-3})$ and $\fq_3$ be the unique prime in $K$ above~3.
Let $M=\Q(b)$ where $b^6 + 3=0$.
Consider the genus 2 curve
\[C / \Q \; : \; y^2 + (x^3 + 1)y = -1\]
with LMFDB label \href{https://www.lmfdb.org/Genus2Curve/Q/2187/a/6561/1}{2187.a.6561.1}.
Its  Jacobian $J=J(C) / \Q$ does not have $\GL_2$-type but it becomes of $\GL_2(K)$-type
over $K$ and it
splits as the square of the elliptic curve
\[E / M \; : \; y^2 = x^3 - \frac{g_4}{48}x - \frac{g_6}{864},\quad g_4 = -\frac{891}{32} b^{5} + \frac{2187}{32} b^{2},\quad g_6 = \frac{15309}{32} b^{3} + \frac{67797}{64}.\]
One checks that $j(E)\in K$, so there is a curve $W/K$ with $j(W) = j(E)$. More precisely, the base change to~$M$ of the curve $W/K$ with LMFDB label \href{https://www.lmfdb.org/EllipticCurve/2.0.3.1/6561.1/a/2}{6561.1-a2} is isomorphic to $E/M$.

Since $W/K$ is modular by \cite[Theorem~1.1]{CaraianiNewton} and the extensions $M / K$ and $M / \Q$ are cyclic, it follows that $J/\Q$ is modular because modularity is preserved by cyclic base change.

We will now establish modularity of $W/K$ using our algorithm, avoiding the use of the general modularity theorem of Caraiani-Newton in the above argument.
Let $f$ be the Bianchi modular form of level $81\calO_K$ and coefficient field $\Q_f = \Q$ with LMFDB label~\href{https://www.lmfdb.org/ModularForm/GL2/ImaginaryQuadratic/2.0.3.1/6561.1/a/}{6561.1-a}.
Let $\rho_{f,3}$ be its 3-adic representation and $\rhobar_{f,3}$ its residual representation.
From Proposition~\ref{prop:reducible} it follows that both
$\rhobar_{f,3}$ and $\rhobar_{W,3}$ have image a $3$-group.

We run our algorithm on the machine described in \S6 with the same number of threads. Assuming GRH, it takes around 17h of runtime to run the procedure \texttt{Example3Adic} in~\cite{git}.

Next we describe the results and summarize the calculations.

We have $K_0 = K = \QQ(\sqrt{-3})$, $n=2$, $p=3$, $S = \{\fq_3\}$ and $m = 1$, hence we only have to compute one layer in Part A of the algorithm.
Indeed, the Selmer group $\Sel_3(K_0, \{\fq_3\})$ is
2-dimensional $\F_3$-vector space and $K_1$ is the extension of $\QQ$ defined by
\[x^{18} + 6x^{15} + 15x^{12} + 20x^9 + 2202x^6 + 6x^3 + 1\]
with Galois group $\Gal(K_1 / K) \simeq C_3^2$.

We now proceed to Part B. The Selmer group $\Sel_3(K_1,\{3\})$ is a 10-dimensional $\F_3$ vector space.  We computed all its $\Gal(K_1/K)$-submodules and obtained, in particular,
13 submodules of dimension 1, and 49 submodules of dimension 2.
Using the working bound $B_2=100$, we eliminate all but one dimension 1 submodules. There are 36 dimension 2 submodules containing it and we also eliminate them all following Remark~\ref{rem:eliminate}. This yields the extension $K_2 / K_1$ with Galois group~$C_3$, $K_2/K$ has Galois group
$C_3^2\rtimes C_3$ and $K_2$ is defined by the (absolute) polynomial
\[
\begin{aligned}
&x^{54} - 9x^{53} + 27x^{52} - 9x^{51} - 162x^{50} + 621x^{49} - 927x^{48} - 2853x^{47} + 17271x^{46} - 27573x^{45}\\
& - 27216x^{44} + 199728x^{43} - 349578x^{42} - 215937x^{41} + 2463120x^{40} - 4329000x^{39} - \\
& 2196252x^{38} + 19087911x^{37} - 19754406x^{36} - 35947710x^{35} + 127700802x^{34} - \\
& 125645994x^{33} - 109373778x^{32} + 459822843x^{31} - 397050057x^{30} - 508686399x^{29} + \\
& 1533961935x^{28} - 935121814x^{27} - 1593661833x^{26} + 3194569692x^{25} - 906284115x^{24} - \\
& 3479989338x^{23} + 4441500135x^{22} + 101725164x^{21} - 4788925272x^{20} + 3255745041x^{19} + \\
&  2214881976x^{18} - 3829955346x^{17} + 659765313x^{16} + 1846125180x^{15} - 1804908330x^{14} +\\
&  380728098x^{13} + 1400962572x^{12} - 1176843159x^{11} - 720091548x^{10} + 827937882x^{9} + \\
& 322896141x^{8} - 297229302x^{7} - 125213643x^{6} + 40510512x^{5} + 26411823x^{4} + \\
& 4309533x^{3} + 496962x^{2} + 28782x + 991.  \\
\end{aligned}
\]

The next step is to compute $K_3$ from $K_2$. The Selmer group $\Sel_3(K_2,\{3\})$ is a 28-dimensional $\F_3$-vector space giving rise, in particular, to 40 dimension 1 $\Gal(K_2/K_0)$-submodule.
Using the working bound $B_3=100$ we discard them all, hence $K_3 = K_2 = K_S$.

Following the procedure described at the end of Section~\ref{sec:exmod2}, we can find a set~$\cT$ of primes in~$K$ whose corresponding Frobenius elements cover generators of all the maximal cyclic subgroups of $\Gal(K_S/K)$ up to conjugation.
Indeed, there are five such subgroups $C_1,\dots, C_5$ all of order~3,
hence, each of the two non-trivial elements in~$C_i$ generates $C_i$.
We found that there are Frobenius elements in~$K$ above the primes
$\{ 2, 5, 7, 19, 73 \}$ covering (conjugates of) the non-trivial elements of the $C_i$ in the following manner.

Note that $2$ and~$5$ are inert in $K$. We check that the primes in~$K$ above~73 and~19 cover the non-trivial elements in $C_1$ and~$C_4$, respectively; the primes above~7 cover a non-trivial element in $C_2$
and one in~$C_3$ whilst the primes above~13 cover the other non-trivial elements in these groups;
finally, the primes $2\O_K$ and $5\O_K$ cover the non-trivial elements in~$C_5$.

For a rational prime~$q$ splitting in~$K$, let $\fp_{q}^1$, $\fp_{q}^2$ denote the two primes in $K$ above it and~$\fq_q^\ast$ denote any of them.
From the above, we conclude that the list of primes
\[\cT = \{2\calO_K, \mathfrak{p}_7^1, \mathfrak{p}_7^2, \mathfrak{p}_{19}^{\ast}, \mathfrak{p}_{73}^{\ast}\} \quad \text{ or } \quad \cT = \{5\calO_K, \mathfrak{p}_7^1, \mathfrak{p}_7^2, \mathfrak{p}_{19}^{\ast}, \mathfrak{p}_{73}^{\ast}\}
\]
suffices to cover one generator (up to conjugation) of each of the $C_i$.
Therefore, under GRH, we can now easily derive the following consequence;
note that Corollary~\ref{cor:3adic} is its part 2).

\begin{corollary}\label{cor:mod3}
Let $\rho_1,\rho_2 : G_K \to \GL_2(\Z_3)$ be continuous and
unramified~outside~$\fq_3$.

1) Then $\rho_1$ and~$\rho_2$ have isomorphic semisimplifications if and only if $\rho_1(\Frob_t)$ and $\rho_2(\Frob_t)$ have the same characteristic polynomials for all $t \in \cT$.

2) Suppose that $\det~\rho_1 = \det~\rho_2$.
Then $\rho_1$ and~$\rho_2$ have isomorphic semisimplifications if and only if $\rho_1(\Frob_t)$ and $\rho_2(\Frob_t)$ have the same traces for all $t \in \cT$.
\end{corollary}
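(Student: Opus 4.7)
The plan is to invoke Theorem~\ref{thm:smaller} for the nontrivial (forward) implication of both parts; the reverse direction is automatic, since Frobenius conjugacy classes are well-defined up to conjugation and characteristic polynomials (hence traces) are class functions, so isomorphic semisimplifications automatically produce the required equalities.

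To apply Theorem~\ref{thm:smaller} I would first verify that the residual images are $3$-groups. This is immediate from Proposition~\ref{prop:reducible}: every continuous $\bar{\rho} : G_K \to \GL_2(\F_3)$ unramified away from $\fq_3$ has image trivial or isomorphic to $C_3$, hence is a $3$-group, so in particular both $\bar{\rho}_1$ and $\bar{\rho}_2$ qualify. Next I would set $\Sigma = \{\Frob_t : t \in \cT\}$ and verify condition~(i) using the computation preceding the statement. That computation shows that the tower stabilises at $K_S = K_2$ with $\Gal(K_S/K) \simeq C_3^2 \rtimes C_3$, that its five maximal cyclic subgroups $C_1,\dots,C_5$ all have order $3$, and that for each $C_i$ there is a prime in $\cT$ whose Frobenius realises a non-identity element of some $\Gal(K_S/K)$-conjugate of $C_i$. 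Since every non-identity element of a cyclic group of order $3$ is a generator, and since characteristic polynomials are invariant under $G_K$-conjugation, $\Sigma'$ covers generators of every maximal cyclic subgroup of $\Gal(K_S/K)$, as required.

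With this setup in place, part~(1) is immediate: condition~(ii) of Theorem~\ref{thm:smaller} is exactly the hypothesis of equal characteristic polynomials at primes in $\cT$, and the theorem concludes that the semisimplifications coincide. For part~(2), I would appeal to Remark~\ref{rem:traces}: since $n=2$ and $\det \rho_1 = \det \rho_2$, the characteristic polynomial of $\rho_i(\Frob_t)$ equals $X^2 - \Tr(\rho_i(\Frob_t))\,X + \det(\rho_i(\Frob_t))$, which is determined by the trace coefficient; equality of traces on $\cT$ therefore forces equality of characteristic polynomials on $\cT$, reducing part~(2) to part~(1).

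The genuinely hard work has already been carried out in the computational portion of \S\ref{sec:exmod3}, where $K_S$ is determined and the Frobenius-covering set $\cT$ is produced; what remains is essentially bookkeeping. The one conceptual step worth spelling out explicitly in the write-up is the class-function argument that allows us to content ourselves with covering each maximal cyclic subgroup only up to $\Gal(K_S/K)$-conjugation when checking hypothesis~(i) of Theorem~\ref{thm:smaller}, which is the only point where a careless reading of the hypothesis might suggest more work is needed than actually is.
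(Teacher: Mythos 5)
Your proposal is correct and follows essentially the same route as the paper: residual images are $3$-groups by Proposition~\ref{prop:reducible}, part 1) is a direct application of Theorem~\ref{thm:smaller} with the computed $K_S$ and $\cT$, and part 2) reduces to part 1) via Remark~\ref{rem:traces} (your explicit observation that for $n=2$ with equal determinants the characteristic polynomial is determined by the trace is exactly the content of that remark in this case, where only $k\leq n-1=1$ is needed).
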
\label{cor:general3}
\begin{proof} From Proposition~\ref{prop:reducible} we know that both residual images are a 3-group.

Part 1) follows directly from Theorem~\ref{thm:smaller} and the calculation of~$K_S$ and~$\cT$ described above.

For part 2) observe from Remark~\ref{rem:traces} that we have to compare traces at Frobenius elements covering~$g_i^k$ for $1 \leq i \leq 5$ and~$1\leq k \leq n-1 =1$
where~$\{g_i\}$ is any choice of generator of the~$C_i$;
from 1) we already know this is achieved by $\cT$ and the result follows from Theorem~\ref{thm:smaller}.
\end{proof}

We can now quickly finish the proof of modularity of~$W$. Indeed,
the representations $\rho_{W,3}$ and $\rho_{f,3}$ are 2-dimensional and both have determinant the $3$-adic cyclotomic character which is trivial over~$K$;
therefore, modularity of~$W$ follows from Corollary~\ref{cor:general3} part 2)
if we check equality of traces
at $\Frob_t$ for all $t \in \cT$; this can be checked easily
using the LMFDB:
\[
\begin{array}{|l|c|c|c|c|c|}
    \hline
     & 2\O_K & \mathfrak{p}_7^1 & \mathfrak{p}_7^2 & \mathfrak{p}_{19}^{\ast} & \mathfrak{p}_{73}^{\ast}    \\ \hline
    \Tr (\Frob_t(\rho_{W,3})) & -1 & -1  & -1 & -1 & 2 \\ \hline
    \Tr (\Frob_t(\rho_{f,3})) & -1 & -1 & -1 & -1 &  2 \\ \hline
\end{array}
\]

\subsection{A surface of conductor $3^{10}$}
Consider the genus 2 curve
\[C / \Q \; : \; y^2 + (x^3+1)y = x^3-1\]
with LMFDB label \href{https://www.lmfdb.org/Genus2Curve/Q/59049/a/177147/1}{59049.a.177147.1}. Recall that $K=\Q(\sqrt{-3})$ and $M=\Q(b)$ where $b^6 + 3=0$.
The Jacobian $J=J(C) / \Q$ does not have $\GL_2$-type but it becomes of $\GL_2(K)$-type
over $K$ and it
splits over~$M$ as the square of the elliptic curve
\[
E / M \; : \; y^2 = x^3 - g_4/48x - g_6/864,\ \text{ with }
\]
\[g_4 = -\frac{2846016}{28561}b^5 + \frac{2686608}{28561}b^2,\quad g_6 = - \frac{1158095232}{4826809}b^3 + \frac{22752498240}{4826809}.\]

One checks that $j(E)\in K$, so there is a curve $W/K$ with $j(W) = j(E)$. More     precisely, the base change to~$M$ of the curve $W/K$ with LMFDB label \href{https://www.lmfdb.org/EllipticCurve/2.0.3.1/729.1/a/1}{729.1-a1} is isomorphic to $E/M$.

As in the previous example, the modularity of $W/K$ follows from the work of Caraiani--Newton and modularity of $J/\Q$ by cyclic base change. Alternatively, we can use Corollary~\ref{cor:3adic} to prove that $W/K$ is modular. Indeed, let $f$ be the Bianchi modular form of level $27\O_K$ and coefficient field $\Q_f = \Q$ with LMFDB label \href{https://www.lmfdb.org/ModularForm/GL2/ImaginaryQuadratic/2.0.3.1/729.1/a/}{729.1-a}. Since the determinants of $\rho_{f,3}$ and $\rho_{W,3}$ are the 3-adic cyclotomic character, we deduce modularity from Corollary~\ref{cor:3adic} as in the previous example; the equality of traces can also be checked using the LMFDB.

\section{Avoiding GRH}

For the calculations in \S\ref{sec:exmod2}-\S\ref{sec:exmod3} to finish in a reasonable amount of time, we had to assume GRH. In particular, to complete the proof of Corollary~\ref{cor:mod3} (and hence of Corollary~\ref{cor:3adic}), we have to avoid~GRH in the computation of $\Sel_3(K_2,\{3\})$.
We achieve this as a consequenve of the following stopping creterion for the algorithm described in Section~\ref{sec:steps}.

\begin{theorem}\label{thm:stopping}
Let $n \geq 2$ and $p \geq n$ be a prime. Let $K$ be a number field containing the $p$-th roots of unity and $S \supset S_p$ a finite set of primes in $K$. Assume the algorithm in Section~\ref{sec:steps} reaches a layer $K_{i+1} / K_i$ such that $\Gal(K_{i+1} / K_i) = \Z/p\Z$.
Then $K_S = K_{i+1}$ is the final layer.
\end{theorem}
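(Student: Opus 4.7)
My plan is to argue by contradiction: assume the tower does not halt at $K_{i+1}$, extract a minimal offending extension $L$, and show it cannot satisfy the two constraints the tower imposes. The crucial simplification is that $p\ge n$ forces $m=1$, so the residual-degree bound given by Corollary~\ref{cor:smaller} reads $f(\cQ\mid \cQ\cap\O_K)\le p$ at every prime of the tower that is unramified over~$K$; it is precisely this tightness that powers the argument.

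Suppose $K_{i+2}\supsetneq K_{i+1}$. By Lemma~\ref{lem:nminus1} applied to the $\Gal(K_{i+1}/K)$-submodule of $\Sel_p(K_{i+1},S)$ corresponding to $K_{i+2}$, I extract a $1$-dimensional $\Gal(K_{i+1}/K)$-submodule $\langle\beta\rangle$ and form $L=K_{i+1}(\sqrt[p]{\beta})$. By Proposition~\ref{prop:correspondence}, $L/K$ is Galois and unramified outside $S$, and because $L\subseteq K_{i+2}$ it inherits the residual-degree bound~$p$. Since $[K_{i+1}:K_i]=p=[L:K_{i+1}]$, the group $\Gal(L/K_i)$ has order $p^2$, so it is either $(\Z/p\Z)^2$ or $\Z/p^2\Z$, and the argument splits according to this dichotomy.

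In the elementary case $\Gal(L/K_i)\simeq(\Z/p\Z)^2$, the field $L$ is itself a $p$-elementary extension of $K_i$, unramified outside $S$, and it still satisfies the bound~$p$ at unramified primes over~$K$. By Proposition~\ref{prop:correspondence} it corresponds to a $2$-dimensional $\Gal(K_i/K)$-submodule of $\Sel_p(K_i,S)$ respecting the bound; but $K_{i+1}$ was built from a $\Gal(K_i/K)$-submodule of maximal dimension among those respecting the bound, forcing $L\subseteq K_{i+1}$ and contradicting $[L:K_{i+1}]=p$. In the cyclic case $\Gal(L/K_i)\simeq\Z/p^2\Z$, I pick a generator $\sigma$, view it inside $\Gal(L/K)$, and invoke the Chebotarev density theorem to find a prime $\fp$ of $K$, outside $S$ and unramified in $L$, whose Frobenius in $\Gal(L/K)$ is conjugate to~$\sigma$. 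Then $f(\fp_L\mid\fp)$ equals the order of~$\sigma$, namely $p^2>p$, contradicting the bound on $L$.

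Both cases being impossible, $K_{i+2}=K_{i+1}$; the same reasoning applied inductively shows $K_{i+k}=K_{i+1}$ for every $k\ge 1$, hence $K_S=K_{i+1}$. The subtlest step I expect to need care with is the elementary case: one has to unpack how $K_{i+1}$ was actually constructed rather than merely cite its existence, using the ``maximality'' formulation of $K_{i+1}$ in Section~\ref{sec:smaller} to promote a valid $2$-dimensional submodule into a contradiction with $[K_{i+1}:K_i]=p$. The cyclic case, by contrast, is essentially immediate from Chebotarev once the group-theoretic dichotomy is in place.
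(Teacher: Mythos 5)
Your proposal is correct and follows essentially the same route as the paper's proof: reduce to $m=1$, extract a degree-$p$ layer $L/K_{i+1}$ Galois over $K$ via Lemma~\ref{lem:nminus1} and Proposition~\ref{prop:correspondence}, and rule out both possibilities for the order-$p^2$ group $\Gal(L/K_i)$ — the elementary case by maximality of $K_{i+1}/K_i$, the cyclic case because a Frobenius generating it would violate the residue-degree bound $p^m=p$. Your explicit appeal to Chebotarev and to the submodule formulation of maximality only makes explicit what the paper leaves implicit.
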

\begin{proof} Since $p \geq n$, we have $m=1$ when applying the algorithm in \S\ref{sec:steps}. For a contradiction, suppose that $K_S \neq K_{i+1}$. Then $K_{i+2} / K_{i+1}$ is a non-trivial extension and, by Lemma~\ref{lem:nminus1} and Proposition~\ref{prop:correspondence}, there is an extension $L/K_{i+1}$ satisfying that $\Gal(L/K_{i+1}) \simeq \Z/p\Z$ and $L/K$ is Galois. In particular, $L/K_i$ is also Galois and
\[
\Gal(L/K_i) \simeq \Gal(L/K_{i+1}) \rtimes \Gal(K_{i+1}/K_i) \simeq \Z/p\Z \rtimes \Z/p\Z
\]
is a group of order $p^2$, hence $\Gal(L/K_i) \simeq (\Z/p\Z)^2$ or $\Z/p^2\Z$. We claim that neither case can occur, yielding a contradiction, and thus $K_S = K_{i+1}$.

We now prove the claim.
By the maximality of~$K_{i+1} / K_i$, the first case cannot occur (otherwise $L \subset K_{i+1}$). If
$\Gal(L/K_i)$ is cyclic of order $p^2$, then any Frobenius element in $\Gal(K_{i+2}/ K)$ that restricts to a generator of $\Gal(L/K_i)$ has order at least $p^2 > p^m = p^1$, hence it does not respect the residue degree bound, a contradiction.
\end{proof}

From the calculations in \S\ref{sec:exmod3}, we have $K=\Q(\sqrt{-3})$, $p=3$, $m=1$
and $\Gal(K_2 / K_1) \simeq \Z/3\Z$, and this can easily be checked without assuming GRH in about 1 minute.
Thus $K_S = K_2$ by Theorem~\ref{thm:stopping}, completing the proof of Corollary~\ref{cor:mod3}.

\end{document}